\theoremstyle{plain}
\newtheorem{theorem}{Theorem}
\newtheorem{lemma}[theorem]{Lemma}
\newtheorem{proposition}[theorem]{Proposition}
\newtheorem{corollary}[theorem]{Corollary}
\theoremstyle{definition}
\newtheorem{remark}[theorem]{Remark}
\newtheorem{definition}[theorem]{Definition}
\newtheorem{example}[theorem]{Example}
\newtheorem*{ack}{Acknowledgement}
\newcommand{\nat}{\mathds{N}}
\newcommand{\real}{\mathds{R}}
\newcommand{\rn}{{\mathds{R}^n}}
\newcommand{\comp}{\mathds{C}}
\newcommand{\Ee}{\mathds{E}}
\newcommand{\I}{\mathds{1}}
\newcommand{\form}{\mathcal{E}}
\newcommand{\nnorm}[1]{\|#1\|}
\newcommand{\scalar}[1]{\langle#1\rangle}
\newcommand{\supp}{\operatorname{supp}}
\newcommand{\id}{\operatorname{id}}
\renewcommand{\leq}{\leqslant}
\renewcommand{\geq}{\geqslant}
\renewcommand{\Re}{\operatorname{Re}}
\renewcommand{\Im}{\operatorname{Im}}
\title{\bfseries Transition density estimates for a class of L\'evy and L\'evy-type processes}
\author{%
    \textsc{Viktorya Knopova}%
    \thanks{V.M.\ Glushkov Institute of Cybernetics,
            NAS of Ukraine,
            40, Acad.\ Glushkov Ave.,
            03187, Kiev, Ukraine,
            \texttt{vic\underline{ }knopova@gmx.de}}
    \textrm{\ \ and\ \ }
    \stepcounter{footnote}\stepcounter{footnote}\stepcounter{footnote}
    \stepcounter{footnote}\stepcounter{footnote}%
    \textsc{Ren\'e L.\ Schilling}%
    \thanks{Institut f\"ur Mathematische Stochastik,
              Technische Universit\"at Dresden,
              D-01062 Dresden, Germany,
              \texttt{rene.schilling@tu-dresden.de}}
    }
\date{}
\begin{document}
\maketitle

\begin{abstract}
    \noindent
    We show  on-and off-diagonal upper estimates for the transition densities of
    symmetric  L\'evy and L\'evy-type processes. To get the  on-diagonal estimates we prove a Nash
    type inequality for the related Dirichlet form. For the off-diagonal estimates  we assume that
    the characteristic
    function of a L\'evy (type) process is analytic, which allows to apply the complex analysis
    technique.

    \medskip\noindent
    \emph{Keywords:} Bernstein function; carr\'e du champ operator; Dirichlet form; Feller process; L\'evy process; large deviations.

    \medskip\noindent
    \emph{MSC 2010:} Primary: 60J35. Secondary: 31C25; 32A10; 47D07; 60G51; 60J75.
\end{abstract}

\section{Introduction}

Transition density estimates for jump processes received much
attention during the recent years. Two-sided heat kernel estimates
for a class of stable-like processes in $\rn$ were obtained by
Kolokoltsov \cite{Kol00}; Bass and Levin \cite{BL02} used a
completely different approach to get transition density estimates
for discrete time Markov chains in $\mathbb{Z}^d$ with certain
conductance. For the transition density estimates for a tempered
stable process see \cite{S10} and the references therein. Chen and
Kumagai  \cite{CK03} obtained two-sided heat kernel estimates and
a parabolic Harnack inequality on $d$-sets, and further extended
these results to symmetric jump-type processes on metric measure
spaces \cite{CK08}. Further results on two-sided heat kernel
estimates and a version of the parabolic Harnack inequality for
symmetric jump processes are contained in \cite{BBCK} and
\cite{BGK}. Pure jump processes whose jump kernel is comparable to
the one of truncated stable-like process are considered by Chen,
Kim and Kumagai  \cite{CKK07}.  See \cite{CKK10} for the heat
kernel estimates for a class of symmetric jump processes in $\rn$
with exponentially decaying jump kernels.

The techniques of getting upper bounds are often based on the paper \cite{CKS} by Carlen, Kusuoka and Stroock where on- and off-diagonal upper bounds for a class of symmetric Markov processes are obtained. The off-diagonal upper bound is obtained in \cite{CKS} in terms of the so-called carr\'e du champ operator which is uniquely determined by  the Dirichlet form related to the Markov process. For L\'evy processes the expression of the carr\'e du champ operator is hidden in the representation of the transition density provided the latter exists. Such observations suggest that in the case of L\'evy processes  the off-diagonal upper bound can be obtained in a similar form as it was done in \cite{CKS}.

Our goal is to get the upper on- and off-diagonal transition density estimates for certain classes of L\'evy and L\'evy-type processes. In Section~\ref{sec1} we prove the equivalence of the Nash type inequality to the existence of the transition density of the related semigroup of probability measures, and find an on-diagonal upper bound for the density. Then we apply these results to study the estimates for the transition density of L\'evy processes whose characteristic exponent is of the form $f(|\xi|^2)$, where $f$ is a (complete) Bernstein function. The main result, Theorem~\ref{pr}, is contained in Section~\ref{sec2}: if the L\'evy process has no diffusion part and if it has exponential moments, we can use the complex analysis technique to get the off-diagonal upper estimates for the transition density provided it exists.

As a by-product we show that the function which controls the off-diagonal behaviour of the transition density is exactly the rate function of the L\'evy process, coming from the theory of large deviations. As an application in Section~\ref{sec3} we show that under the conditions of Theorem~\ref{pr} the transition density satisfies the large deviation principle.

In Section~\ref{sec4} we show how the results obtained in the previous sections can be generalized to a large class of L\'evy-type processes.

\bigskip\noindent
\textbf{Notation.}
$\mathbb{N}$, $\real$, $\comp$ are the sets of positive integers, real numbers and  complex numbers, respectively; $\rn$ is the $n$-dimensional  Euclidean space; $C_0^\infty(\rn)$ are the test functions, $S(\rn)$ is the Schwartz space of rapidly decreasing functions on $\rn$. By $f \asymp g$ we indicate that there exist some positive constants $c_1$, $c_2$ such that $c_1f\leq g\leq c_2 f$. We write $B(x,r)$ for the open ball with centre $x$ and radius $r$.

\section{Preliminary results}\label{sec0}
In this section we will briefly recall some basic facts on continuous negative definite functions and Dirichlet forms which we will need later on.

An $n$-dimensional L\'evy process $X=(X_t)_{t\geq 0}$ is a stochastic process with values in $\rn$ with stationary and independent increments and stochastically continuous sample paths. It is well known that such processes are---up to indistinguishability---characterized by the characteristic function
$$
    \Ee e^{i\xi\cdot X_t} = e^{-t\psi(\xi)},\quad\xi\in\rn,\; t\geq 0,
$$
and the characteristic exponent $\psi:\rn\to\comp$. The exponent $\psi:\rn\to\comp$ is a \emph{continuous negative definite function} (in the sense of Schoenberg) which is equivalent to saying that $\psi$ enjoys a \emph{L\'evy-Khintchine representation},
\begin{equation}\label{pnu}
    \psi(\xi)
    = i\ell\cdot\xi + \frac 12\,\xi\cdot Q\xi + \int_{\rn\setminus\{0\}} \left(1- e^{i\xi\cdot y} + \frac{i\xi \cdot y}{1+|y|^2}\right) \nu(dy).
\end{equation}
Here $\ell\in\rn$, $Q\in\real^{n\times n}$ is a positive semi-definite matrix, and $\nu(dy)$ is the  L\'evy measure, i.e.\ a measure on $\rn\setminus\{0\}$ such that $\int_{\rn\setminus\{0\}}(1\wedge |y|^2)\, \nu(dy)<\infty$. The triplet $(\ell,q,\nu)$ uniquely characterizes $\psi$, hence $X$.

Note that negative definiteness of $\psi$ does not mean that $-\psi$ is positive definite. Note also, that continuous negative definite functions are always polynomially bounded:
$$
    |\psi(\xi)| \leq c_\psi\,(1+|\xi|^2),\qquad
    c_\psi = \sup_{|\eta|\leq 1}|\psi(\eta)|.
$$
A standard reference for these and further properties is the monograph \cite{J01}.

In this paper we will only consider real-valued $\psi$ with $Q\equiv 0$. Thus, \eqref{pnu} becomes
\begin{equation}\label{pnur}
    \psi(\xi)
    = \int_{\rn\setminus\{0\}} \big(1- \cos(\xi\cdot y)\big)\, \nu(dy)
\end{equation}
and the associated L\'evy process $X$ is symmetric in the sense that $\text{law}(X_t) = \text{law}(-X_t)$.

Let $\psi:\rn\to\real$ and write $\widehat u(\xi) = (2\pi)^{-n}\int u(x)\,e^{-ix\cdot\xi}\,dx$ for the Fourier transform. Then
\begin{equation}
    -\psi(D)u(x)
    =- \int_{\rn} e^{i x\cdot\xi}\, \psi(\xi)\,\widehat{u}(\xi)\, d\xi,\quad u\in C_0^\infty(\rn),
\end{equation}
is a pseudo-differential operator with symbol $\psi$. It is not
hard to see that $-\psi(D)$ coincides on the test functions
$C_0^\infty(\rn)$ with the infinitesimal generator $A$ of the
L\'evy process $X$. More precisely, if we denote by $P_t u(x) :=
\Ee u(X_t+x)$ the convolution semigroup associated with $X$,
$P_t|_{C_0^\infty(\rn)}$  can be extended to a strongly
continuous, symmetric sub-Markovian semigroup on each
$L_p(\rn,dx)$, $p\in [1,\infty)$, as well as on the space of
continuous functions vanishing at infinity: $C_\infty(\rn) :=
\overline{C_0^\infty(\rn)}^{\nnorm\cdot_\infty}$. In any case,
$C_0^\infty(\rn)$ is an operator core, and the closure of
$-\psi(D)|_{C_0^\infty(\rn)}$ is the $L_p$-generator. On
$C_0^\infty(\rn)$ it is also possible to express the semigroup
$(P_t)_{t\geq 0}$ as pseudo-differential operators,
\begin{equation}
    P_t u(x)= \int_{\rn} e^{i x\cdot\xi} \,e^{-t\psi(\xi)}\,\widehat{u}(\xi)\, d\xi,\quad u\in C_0^\infty(\rn),
\end{equation}
see e.g.\ \cite[Chapter 4]{J01}.

We are mostly interested in the $L_2$ setting. The domain of the $L_2$ generator is a concrete function space
\begin{equation}
    H^{\psi,2}(\rn) :=
    \overline{C_0^\infty(\rn)}^{\nnorm\cdot_{\psi,2}}
    \quad\text{where}\quad
    \nnorm u_{\psi,2}^2 = \nnorm{\psi(D)u}_{L_2}^2 + \nnorm u_{L_2}^2,
\end{equation}
see \cite[Chapter 4.1]{J01}. Denote by $\form^\psi(u,v)$ the Dirichlet form associated with $-\psi(D)$, see \cite[Example 4.7.28]{J01}. It is given by
\begin{equation}\label{eform1}\begin{aligned}
    \form^\psi(u,v)
    &=\int_{\rn}\psi(\xi)\widehat{u}(\xi)\overline{\widehat{v}}(\xi)\, d\xi\\
    &=\frac{1}{2}\int_{\rn}\int_{\rn}(u(x+y)-u(x))(v(x+y)-v(x))\,\nu(dy)\,dx.
\end{aligned}\end{equation}
    The domain $D(\form^\psi) = H^{\psi,1}(\rn)$ is the closure of  $C_0^\infty(\rn)$ with respect to the norm induced by the scalar product
$$
    \form^\psi_1(u,v) := \form^\psi(u,v) + \scalar{u,v}_{L_2}.
$$

In Section~\ref{sec2} we will need the notion of a \emph{carr\'e
du champ operator} to show the off-diagonal upper bound for the
transition density. Below we give the definition of the carr\'e du
champ operator in the sense of Bouleau and Hirsch for a general
Dirichlet form $(\form, D(\form))$ on  $L_2(\Omega, m)$, see
\cite[Definition 4.1.2]{BH}. As usual, $m$ is a positive
$\sigma$-finite Radon measure on a locally compact measurable
space $(\Omega, \mathcal{F})$.
\begin{definition}\label{d412}
    We say that a Dirichlet form $(\form,D(\form))$ admits a \textit{carr\'e du champ} if there exists a subspace $\mathcal{C}$ of $D(\form)\cap L_\infty(\Omega, m)$ which is dense in $D(\form)$ such that for all $f\in \mathcal{C}$ there exists a function $\tilde{f}$ such that
    $$
        2\form(fh,f)-\form(h,f^2)=\int h \tilde{f}\, dm
        \quad
        \text{for all $h\in D(\form)\cap L_\infty(\Omega, m)$}.
    $$
\end{definition}
If the assumptions of Definition \ref{d412} are satisfied, then there exists a unique positive symmetric and continuous bilinear operator  $\Gamma: D(\form)\times D(\form)\to L_1(\Omega, m)$, called the \emph{carr\'e du champ operator}, such that  for all $f,g,h\in D(\form)\cap L_\infty(\Omega, m)$
\begin{equation}\label{cdc1}
    \form(fh,g)+\form(gh,f)-\form(h,fg)=\int h \,\Gamma(f,g)\, dm
\end{equation}
cf.\  \cite[Proposition 4.1.3]{BH}. For a L\'evy process we have an explicit  representation of the carr\'e du champ operator
\begin{equation}\label{Gam1}
    \Gamma(u,u)=\Gamma^\psi(u,u)=\frac{1}{2}\int_{\rn} (u(x+y)-u(x))^2 \,\nu(dy)
\end{equation}
where $\nu$ is the L\'evy measure and $\psi$ is given by \eqref{pnur}.

\medskip
We are particularly interested in radially symmetric negative
definite functions $\psi:\rn\to\real$. In this case $\psi(\xi) =
f(|\xi|^2)$ for some function $f:[0,\infty)\to[0,\infty)$. In
order that $\xi\mapsto f(|\xi|^2)$, $\xi\in\rn$, is negative
definite for \emph{every} dimension $n\geq 1$ it is necessary and
sufficient that $f$ is a Bernstein function. This is a classic
result by Sch\"onberg and Bochner, see \cite{ssv} or \cite{JS}.
\begin{definition}
\begin{enumerate}
\item[a)]
    A real-valued function  on $[0,\infty)$  is called a \emph{Bernstein function}, if
    $$
        f(t) = a+bt + \int_{(0,\infty)} (1-e^{-st})\,\mu(dt)
    $$
    with $a,b\geq 0$ and a measure $\mu$ on $(0,\infty)$ satisfying $\int_{(0,\infty)} (1\wedge t)\,\mu(dt)<\infty$.
\item[b)]
    A Bernstein function is said to be a \emph{complete Bernstein function} if the representing measure $\mu(dt)$ is of the form $m(t)\,dt$ with a completely monotone density $m(t)$, i.e.\ where $m(t)$ is the Laplace transform of a positive measure on $[0,\infty)$.
\end{enumerate}
\end{definition}
Bernstein functions are equivalently characterized by the requirement that
$$
     f\in C^\infty(0,\infty), \quad f\geq 0, \quad\text{and}\quad (-1)^k f^{(k)}(x)\leq 0\quad\text{for all}\quad k\geq 1.
$$
For the properties of Bernstein and complete Bernstein functions we refer to \cite[Chapter~3.9]{J01} and \cite{ssv}. For our purposes we will only mention the following estimate
\begin{equation}\label{derf}
    \big| f^{(k)}(x)\big|\leq \frac{k!}{x^k}\,f(x),\quad k\geq 0,\; x >0.
\end{equation}

\section{An on-diagonal estimate for the transition density}\label{sec1}

In this section we will prove  the equivalence of the Nash type inequality for general Dirichlet forms and the absolute continuity of the related probability measure with respect to Lebesgue measure on $\rn$; then we will establish an upper bound for this transition density. For the special case where $f(x)=x^\alpha$, $0<\alpha<1$, we refer to \cite{CKS} and to the discussion in \cite{ben-mah}. The importance of Nash and Sobolev type inequalities for (symmetric) Markovian semigroups has been pointed out in the monograph \cite{vcs}.

Let $f$ be a Bernstein function, $(\form,D(\form))$ be a symmetric
regular Dirichlet form with generator $(A,D(A))$ and associated
$L_2(\rn)$--  sub-Markovian semigroup $(P_t)_{t\geq 0}$. Since the
operators $P_t$ are symmetric, we may (and do) extend $P_t$ to the
whole scale $L_p$, see e.g.\ \cite{Da}. By $\|P_t\|_{L_p\to L_q}$
we indicate the corresponding $L_p$--$L_q$ operator norm.

\begin{proposition}\label{nash2}
    Let  $(P_t)_{t\geq 0}$ be the semigroup  associated with the regular symmetric
    Di\-rich\-let form $(\form,D(\form))$ and let $f:(0,\infty)\to [0,\infty)$ be some
    differentiable function such that
     $f$ is subadditive,
    $$
        f'(x) > 0
        \text{\ \ and\ \ }
        f'(x) \leq \frac 1x\,f(x)
        \quad\text{for all\ \ }x>0.
    $$
    Then the following Nash inequality
\begin{equation}\label{n222}
    \|u\|_{L_2}^2 \, f\left(\left[\frac{\|u\|_{L_2}}{\|u\|_{L_1}}\right]^{4/n}\right)
    \leq
    C_0\left[\form(u,u)+\delta\|u\|_{L_2}^2\right],
\end{equation}
    ($C_0>0$ and $\delta\geq 0$ are some constants) holds if, and only if,
\begin{equation}\label{on-diag}
    \|P_t\|_{L_1\to L_\infty}
    \leq \left[ f^{-1} \left(\frac{1}{ \gamma t}\right) \right]^{n/2}e^{ 2\delta t}.
\end{equation}
     where the constant $\gamma>0$ depends only on $C_0$, and $\delta$ is as in \eqref{n222}.
\end{proposition}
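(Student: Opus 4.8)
The plan is to establish the two implications separately, using the standard machinery relating Nash-type inequalities to ultracontractivity, adapted to the fact that the nonlinearity here is the Bernstein-type function $f$ rather than a pure power.

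\medskip\noindent
\textbf{From the Nash inequality to ultracontractivity.}
First I would reduce to the case $\delta=0$: replacing $P_t$ by $e^{-\delta t}P_t$ (equivalently, shifting the generator by $\delta$) turns \eqref{n222} into the same inequality with $\delta=0$ and turns \eqref{on-diag} into the bound without the $e^{2\delta t}$ factor, so it suffices to treat $\delta=0$. For a fixed $0\le u\in L_1\cap L_2$ with $\|u\|_{L_1}\le 1$ (by homogeneity and the sub-Markovian property one may normalise the $L_1$-norm, which is non-increasing under $P_t$), set $\phi(t):=\|P_t u\|_{L_2}^2$. Then $-\phi'(t)=2\form(P_tu,P_tu)$, and feeding $P_tu$ into \eqref{n222} with $\|P_tu\|_{L_1}\le 1$ gives the differential inequality
$$
    -\phi'(t)\ \ge\ \frac{1}{C_0}\,\phi(t)\,f\!\left(\phi(t)^{2/n}\right).
$$
Separating variables, $\displaystyle\int \frac{d\phi}{\phi\, f(\phi^{2/n})}\le -\frac{t}{C_0}$; the substitution $s=\phi^{2/n}$ turns the left side into $\frac n2\int \frac{ds}{s\,f(s)}$, and the elementary inequality $\frac{d}{ds}\big({-1/f(s)}\big)=f'(s)/f(s)^2\le \frac{1}{s f(s)}$ (this is exactly the hypothesis $f'(s)\le f(s)/s$) lets me bound $\int_{\phi(t)^{2/n}}^{\infty}\frac{ds}{sf(s)}\ge \frac{1}{f(\phi(t)^{2/n})}$. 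Combining, $f(\phi(t)^{2/n})\ge \gamma t$ for a constant $\gamma$ depending only on $C_0$ (and $n$), whence $\phi(t)^{2/n}\le f^{-1}(1/(\gamma t))$ using monotonicity of $f$ (which follows from $f'>0$). This yields $\|P_t u\|_{L_2}^2\le [f^{-1}(1/(\gamma t))]^{n/2}$ for all such $u$, i.e.\ $\|P_t\|_{L_1\to L_2}^2\le [f^{-1}(1/(\gamma t))]^{n/2}$. Splitting $P_t=P_{t/2}P_{t/2}$ and using symmetry, $\|P_t\|_{L_1\to L_\infty}\le \|P_{t/2}\|_{L_1\to L_2}\|P_{t/2}\|_{L_2\to L_\infty}=\|P_{t/2}\|_{L_1\to L_2}^2\le [f^{-1}(2/(\gamma t))]^{n/2}$, which is \eqref{on-diag} after renaming $\gamma$.

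\medskip\noindent
\textbf{From ultracontractivity to the Nash inequality.}
Conversely, again reduce to $\delta=0$. The idea is the classical interpolation argument: for $u\in D(\form)$ and any $t>0$ write
$$
    \|u\|_{L_2}^2 = \scalar{u,u}_{L_2}
    = \scalar{u-P_tu,\,u}_{L_2} + \scalar{P_tu,\,u}_{L_2}.
$$
The first term is controlled by the form: $\scalar{u-P_tu,u}_{L_2}=\int_0^t \form(P_su,u)\,ds$, and since $s\mapsto\form(P_su,u)$ is non-increasing this is $\le t\,\form(u,u)$. The second term is controlled by ultracontractivity and Hölder: $\scalar{P_tu,u}_{L_2}\le \|P_tu\|_{L_\infty}\|u\|_{L_1}\le \|P_t\|_{L_1\to L_\infty}\|u\|_{L_1}^2\le [f^{-1}(1/(\gamma t))]^{n/2}\|u\|_{L_1}^2$. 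Thus for every $t>0$,
$$
    \|u\|_{L_2}^2 \le t\,\form(u,u) + \big[f^{-1}(1/(\gamma t))\big]^{n/2}\|u\|_{L_1}^2 .
$$
Now I optimise in $t$: choosing $t$ so that $f^{-1}(1/(\gamma t))=(\|u\|_{L_2}/\|u\|_{L_1})^{4/n}$, i.e.\ $t=1/\big(\gamma f((\|u\|_{L_2}/\|u\|_{L_1})^{4/n})\big)$, the second term becomes $\frac12\|u\|_{L_2}^2$ (up to the factor $(\|u\|_{L_2}^{2/n})/(\|u\|_{L_1}^{2/n})\cdot\|u\|_{L_1}^2=\|u\|_{L_2}^{2/n}\|u\|_{L_1}^{2-2/n}$ — I need to be slightly more careful and instead pick $t$ to balance the two terms), and rearranging gives precisely \eqref{n222} with $C_0=1/\gamma$. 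Undoing the $\delta$-reduction reinstates the $\delta\|u\|_{L_2}^2$ on the right and the $e^{2\delta t}$ on the left.

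\medskip\noindent
\textbf{Main obstacle.}
The routine parts are the semigroup differentiation and the $L_1\to L_2\to L_\infty$ splitting; the delicate point is the manipulation of the nonlinearity $f$ in the ODE step. Specifically, I must be sure the hypotheses "$f$ subadditive, $f'>0$, $f'(x)\le f(x)/x$" are exactly what is needed to (i) guarantee $f^{-1}$ exists and is well-behaved, (ii) justify the inequality $\int_{a}^\infty \frac{ds}{sf(s)}\ge 1/f(a)$ uniformly, and (iii) carry out the optimisation in $t$ so that the exponents match and the constant $\gamma$ genuinely depends only on $C_0$ and $n$. Subadditivity of $f$ enters when I need to compare $f$ at different scales after the $P_t=P_{t/2}P_{t/2}$ splitting (to absorb the factor $2$ in $f^{-1}(2/(\gamma t))$ back into the constant); getting all these constants to track correctly — rather than the inequality itself — is where the real bookkeeping lies.
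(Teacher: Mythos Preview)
Your overall strategy matches the paper's: there too one derives the differential inequality $-\phi'\ge c\,\phi\,f(\phi^{2/n})$ (with $\phi(t)=e^{-2\delta t}\|P_tu\|_{L_2}^2$ rather than first reducing to $\delta=0$), integrates, and then uses the $L_1\!\to\! L_2\!\to\! L_\infty$ splitting by symmetry; for the converse the paper uses exactly your decomposition and chooses $t_0=1\big/\big(\gamma f\big((\|u\|_{L_2}/(\sqrt2\,\|u\|_{L_1}))^{4/n}\big)\big)$ so that the ultracontractive term becomes $\tfrac12\|u\|_{L_2}^2$, with subadditivity of $f$ then absorbing the factor $\sqrt2$ --- this is the ``balancing'' you allude to but do not carry out.

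There is, however, a genuine gap in your ODE step, and it is worth flagging that the paper's displayed chain has the same reversal. Separation of variables gives
\[
\frac n2\int_{\phi(t)^{2/n}}^{\phi(0)^{2/n}}\frac{ds}{s\,f(s)}\ \ge\ c\,t,
\]
and the hypothesis $f'(s)\le f(s)/s$ does yield $\int_a^\infty\frac{ds}{s\,f(s)}\ge 1/f(a)$. But these are \emph{two lower bounds} on (essentially) the same integral; they cannot be combined to conclude $1/f(\phi(t)^{2/n})\ge \gamma t$. What you actually need is the \emph{upper} bound $\int_a^{b}\frac{ds}{s\,f(s)}\le C/f(a)$, and that would follow from $f'(s)\ge c\,f(s)/s$ --- the opposite inequality. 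In the paper's notation, the step
\[
\Big(\frac{1}{f_1(\phi^2)}\Big)'=-\frac{2f_1'(\phi^2)\,\phi\,\phi'}{f_1(\phi^2)^2}\ \ge\ -\frac{2\phi'}{\phi\,f_1(\phi^2)}
\]
requires $\phi^2 f_1'(\phi^2)\ge f_1(\phi^2)$, not $\le$. So the forward implication, as written, is incomplete: one needs either an additional lower-growth condition on $f$ (for instance $x\mapsto f(x)x^{-\kappa}$ increasing for some $\kappa>0$, which the paper in fact imposes in its applications), or the on-diagonal bound should be phrased through the inverse of $G(a)=\int_a^\infty\frac{ds}{s\,f(s)}$ rather than through $f^{-1}$.
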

\begin{proof}
Assume that \eqref{n222} holds. Fix some $u\in D(\form)\cap L_\infty(\rn)\cap L_1^+(\rn)$ with $\|u\|_{L_1}=1$ and set $u_t(x):=P_tu(x)$ and $\phi(t):=e^{-2\delta t} \|u_t\|_{L_2}^2$. Since $\form(u,u)=\nnorm{(-A)^{1/2}u}^2_{L_2}$ and $\frac d{dt}P_t = AP_t$ we find
\begin{align*}
    -\frac{d \phi(t)}{dt}
    =2e^{-2\delta t} \big[ \form(u_t,u_t)+\delta\|u_t\|_{L_2}^2\big]
    &\geq e^{-2\delta t}  \frac{2}{C_0} \|u_t\|_{L_2}^2\,f\big(\|u_t\|_{L_2}^{4/n}\big)\\
    &\geq  \frac{2}{C_0} \,\phi(t)\,f\big(\phi(t)^{2/n}\big).
\end{align*}
For the last estimate we used the fact that $f(e^{4\delta t/n}\phi^2(t))\geq f(\phi^2(t))$. It is not hard to see that for each $n\in\nat$ the function $f_1(x):= f(x^{1/n})$ enjoys the same properties as $f$: it is strictly increasing and
$$
    f_1'(x)
    = \frac 1n\, f'(x^{1/n}) \, x^{1/n - 1}
    \leq \frac 1n\, f(x^{1/n})\, x^{-1/n}\,x^{1/n - 1}
    \leq \frac 1x\, f_1(x).
$$
Combining this with the above estimate for $\phi'$ we get
\begin{equation}\label{f-der}
    \left(\frac{1}{f_1(\phi^2)}\right)'
    =-\frac{2\,f'_1(\phi^2)\,\phi\,\phi'}{f_1^2(\phi^2)}
    \geq -\frac{2 \,\phi'}{\phi\, f_1(\phi^2)}
    \geq  \frac{ 4 \, \phi \,f_1 (\phi^2)}{ C_0\, \phi\, f_1(\phi^2)}
    = \frac{4}{C_0} .
\end{equation}
Now we integrate \eqref{f-der} from $0$ to $t$ and conclude
$$
    \frac{1}{f_1(\phi^2(t))}
    \geq \frac{1}{f_1(\phi^2(t))} - \frac{1}{f_1(\|u\|_{L_2}^{4/n})}
    \geq  \frac{4}{C_0}\,t
$$
or
$$
    \phi(t)\leq  \left[f^{-1}\left(\frac{1}{\gamma  t}\right) \right]^{n/2},\quad \gamma = \frac{4}{C_0}
$$
This implies that
$$
    \|P_t\|_{L_1\to L_2}
    \leq \left[f^{-1}\left(\frac{1}{\gamma t}\right) \right]^{n/4} e^{\delta t},
$$
and, by the usual semigroup and symmetry arguments, see e.g.\ \cite{vcs}, we finally get
$$
    \|P_t\|_{L_1\to L_\infty}
    \leq \|P_{t/2}\|_{L_1\to L_2} \cdot \|P_{t/2}\|_{L_2\to L_\infty}
    = \|P_{t/2}\|_{L_1\to L_2}^2
    \leq \left[f^{-1}\left(\frac{1}{\gamma t}\right) \right]^{n/2} e^{2 \delta t}
$$
which is \eqref{on-diag}  with $\gamma = 4/C_0$.

\medskip
Conversely, assume that \eqref{on-diag} holds. Let  $v\in D(A)\cap L_\infty(\rn) \cap L_1^+(\rn)$ and set $v_t(x):=e^{-\delta t}\,P_tv(x)$. Then
$\|v_t\|_{L_\infty} \leq \|v\|_{L_1} [f^{-1} (1/\gamma t) ]^{n/2}$, and
$$
    v_t=v-\int_0^t (\delta\id +A) v_s \, ds.
$$
Since $P_t$ and $A$ are self-adjoint operators,
$$
    \scalar{v,P_tv}_{L_2}\leq \|v\|_{L_2}^2
    \quad\text{and}\quad
    \scalar{v,Av_t}_{L_2}
    =e^{-\delta t} \scalar{v,A P_t v}_{L_2}
    = e^{-\delta t}  \|(AP_t)^{1/2} v\|_{L_2}^2
    \leq \form(v,v).
$$
Therefore
\begin{align*}
    \|v\|_{L_1}^2\,  \left[f^{-1}\left(\frac{1}{\gamma  t}\right) \right]^{n/2}
    \geq \|v\|_{L_1}\|v_t\|_{L_\infty}
    \geq \scalar{v,v_t}_{L_2}
    &=\|v\|_{L_2}^2 -\int_0^t \scalar{v,(\delta\id +A)v_s}_{L_2}\,ds\\
    &\geq \|v\|_{L_2}^2 -t\,\left[\form(v,v)+\delta \|v\|_{L_2}^2\right],
\end{align*}
and we arrive at
$$
    \delta \|v\|_{L_2}^2 +\form(v,v)\geq \frac{\|v\|_{L_2}^2}{t}\left[ 1-\frac{\|v\|_{L_1}^2}{\|v\|_{L_2}^2}\left[f^{-1}
    \left(\frac{1}{ \gamma t}\right) \right]^{n/2}\right].
$$
Choosing $t=t_0=1\Big/ \gamma  f\Big(\left(\frac{\|v\|_{L_2}}{\sqrt{2}\,\|v\|_{L_1}}\right)^{4/n}\Big)$, we get
$$
    \nnorm u_{L_2}^2\, f\left(\left[\frac{\|v\|_{L_2}}{\sqrt{2}\,\|v\|_{L_1}}\right]^{4/n}\right)
    \leq \frac 2\gamma\,\left[\form(u,u)+\delta\nnorm u_{L_2}^2\right].
$$
Since $f$ is subadditive, we see that $\frac 14 f(y) \leq f(y/4) \leq f(y/4^{1/n})$ and this yields \eqref{n222}  with $C_0 = 8/\gamma$.
\end{proof}

We can apply Proposition~\ref{nash2} to get on-diagonal estimates for a wide class of L\'evy processes which are subordinate to Brownian motion. It is well-known that the symbols of such L\'evy processes are of the form $\xi\mapsto f(|\xi|^2)$ where $f(x)$ is a Bernstein function. The corresponding Dirichlet form can be expressed as
\begin{equation*}
    \form^{f(|\cdot|^2)}(u,u)
    = \int_\rn f(|\xi|^2)\,|\widehat u(\xi)|^2\,d\xi,\quad u\in C_0^\infty(\rn).
\end{equation*}
From a function-space point of view this representation is rather complicated and it is desirable to get an equivalent representation involving differences of the function $u$, cf.\ \cite{JS}. Let us, for simplicity, assume that $f(x)$ has no linear term. Under the additional condition that there exists some $0<\kappa<1$ such that
\begin{equation*}
    t\mapsto f(t)\,t^{-\kappa}\quad \text{is increasing as}\quad t\to \infty
\end{equation*}
it was shown in \cite{KZ} and \cite{Mo} that
$$
    \form^{\psi_1}(u,u)
    =\frac{1}{2}\int_{\rn}\int_{B(0,1)} |u(x)-u(y)|^2\, f\left(\frac{1}{|x-y|^2}\right) \frac{dy\, dx}{|x-y|^n},
$$
where
\begin{equation}\label{psi1}
    \psi_1(\xi):=\int_{B(0,1)} (1-\cos(\xi\cdot y))f\left(\frac{1}{|y|^2}\right)\, \frac{dy}{|y|^n},
\end{equation}
is a negative definite function, defines an equivalent Dirichlet form. This is to say that $D(\form^{f(|\cdot|^2)})=D(\form^{\psi_1})$ and that
$$
    \form^{f(|\cdot|^2)}(u,u) + \scalar{u,u}_{L_2}
    \asymp
    \form^{\psi_1}(u,u) + \scalar{u,u}_{L_2}
$$
for all $u\in D(\form^{f(|\cdot|^2)})=D(\form^{\psi_1})$.

Obviously, if  the Nash inequality \eqref{n222} holds for $\form^{\psi_1}$, it also holds for $\form^{f(|\cdot|^2)}$ and vice versa. Note that the Bernstein function $f$ satisfies the assumptions of Proposition \ref{nash2}.

\begin{lemma}\label{nash}
    Let $f$ be a Bernstein function without linear term such that for some $\kappa\in (0,1)$ the function $t\mapsto f(t)\,t^{-\kappa}$ increases as $t\to\infty$.  Then the transition density of the L\'evy process with L\'evy exponent $f(|\xi|^2)$ (or $\psi_1(\xi)$) exists and  there exist suitable constants $c>0, \gamma\in (0,1]$ such that
\begin{equation}\label{pf1}
    p_t(x)
    \leq c \left[ f^{-1}\left(\frac{1}{\gamma t}\right)\right]^{n/2},
    \quad\text{for all\ \ $0<t\leq 1$ and $x\in\rn$}.
\end{equation}
\end{lemma}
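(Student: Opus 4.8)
The plan is to deduce \eqref{pf1} from Proposition~\ref{nash2}: I will check that $f$ meets the hypotheses of that proposition and that the Nash inequality \eqref{n222} holds for the Dirichlet form $\form^{f(|\cdot|^2)}$ with $\delta=0$, and then read off the on-diagonal bound \eqref{on-diag}. Since $f$ is a Bernstein function, $\xi\mapsto f(|\xi|^2)$ is a continuous negative definite function (Schoenberg--Bochner), so $\form^{f(|\cdot|^2)}$ is a regular symmetric Dirichlet form with operator core $C_0^\infty(\rn)$ whose $L_2$-semigroup is the convolution semigroup of the L\'evy process in question; hence Proposition~\ref{nash2} applies. One could equally work with the equivalent Dirichlet form $\form^{\psi_1}$, since Nash-type inequalities are preserved under equivalence of forms, but the Fourier representation of $\form^{f(|\cdot|^2)}$ is the handiest here.

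First I would verify the hypotheses on $f$. A Bernstein function is $C^\infty$ on $(0,\infty)$ and subadditive (the latter, e.g., from $f(s+t)-f(s)=\int_s^{s+t}f'\le\int_0^t f'=f(t)-f(0)\le f(t)$, using that $f'$ is non-increasing and $f(0)\ge0$). As $\psi(\xi)=f(|\xi|^2)$ is a characteristic exponent we have $f(0)=0$, and the growth assumption $f(t)\,t^{-\kappa}\uparrow$ as $t\to\infty$ forces $f$ to be unbounded; in particular $f\not\equiv\mathrm{const}$, so the completely monotone, non-increasing function $f'$ vanishes nowhere on $(0,\infty)$ and $f'>0$ there. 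The bound $f'(x)\le f(x)/x$ is precisely \eqref{derf} with $k=1$. The same growth assumption makes $f\colon[0,\infty)\to[0,\infty)$ a strictly increasing bijection, so $f^{-1}$ is globally defined, which is what makes the right-hand side of \eqref{pf1} meaningful.

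Next I would establish \eqref{n222} for $\form^{f(|\cdot|^2)}$ by the usual Fourier cut-off argument. For $u\in C_0^\infty(\rn)$ and $R>0$,
\begin{align*}
    \|u\|_{L_2}^2
    &= \int_{|\xi|\le R}|\widehat u(\xi)|^2\,d\xi + \int_{|\xi|>R}|\widehat u(\xi)|^2\,d\xi\\
    &\le c_n\,R^n\,\|u\|_{L_1}^2 + \frac{1}{f(R^2)}\int_{|\xi|>R}f(|\xi|^2)\,|\widehat u(\xi)|^2\,d\xi\\
    &\le c_n\,R^n\,\|u\|_{L_1}^2 + \frac{1}{f(R^2)}\,\form^{f(|\cdot|^2)}(u,u),
\end{align*}
where $c_n>0$ depends only on $n$ (it comes from $\sup_\xi|\widehat u(\xi)|\le c\|u\|_{L_1}$ and the volume of a ball) and where I used that $f$ is increasing. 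Choosing $R$ so that the first summand equals $\tfrac12\|u\|_{L_2}^2$, i.e.\ $R^2=\kappa_n(\|u\|_{L_2}/\|u\|_{L_1})^{4/n}$ with an explicit $\kappa_n>0$, gives $\tfrac12\|u\|_{L_2}^2\,f(R^2)\le\form^{f(|\cdot|^2)}(u,u)$. It then remains to remove $\kappa_n$ from the argument of $f$: since $f$ is increasing and $t\mapsto f(t)/t$ is non-increasing (again \eqref{derf}), $f(\kappa_n s)\ge\min(1,\kappa_n)\,f(s)$ for every $s>0$, which yields \eqref{n222} with $\delta=0$ and $C_0=2/\min(1,\kappa_n)$; the inequality extends from $C_0^\infty(\rn)$ to $D(\form^{f(|\cdot|^2)})\cap L_1(\rn)$ by density.

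Finally, Proposition~\ref{nash2} turns \eqref{n222} (with $\delta=0$) into $\|P_t\|_{L_1\to L_\infty}\le[f^{-1}(1/(\gamma t))]^{n/2}$ for all $t>0$, with $\gamma=4/C_0$; replacing $\gamma$ by $\min(1,4/C_0)\in(0,1]$ only weakens the estimate, so we may take $\gamma\in(0,1]$ as required. Boundedness of $P_t$ from $L_1$ to $L_\infty$ means that the convolution operator $P_tu=\mu_t*u$, with $\mu_t=\mathrm{law}(X_t)$, has a bounded kernel; hence $\mu_t(dx)=p_t(x)\,dx$ with $0\le p_t\in L_\infty(\rn)$ and $\|p_t\|_{L_\infty}=\|P_t\|_{L_1\to L_\infty}$, which gives both the existence of the transition density and \eqref{pf1} (with $c=1$, and in fact for all $t>0$). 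The only steps requiring care are the Fourier cut-off estimate and the bookkeeping of the constants inside the Bernstein function $f$, where monotonicity and subadditivity must substitute for the homogeneity one has for a pure power $f(x)=x^\alpha$; the rest is a direct application of Proposition~\ref{nash2}.
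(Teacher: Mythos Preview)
Your proof is correct and takes a different, more direct route than the paper's. The paper proves the Nash inequality for the equivalent form $\form^{\psi_1}$ in physical space: it restricts the double integral to $|x-y|<r<1$, pulls out $f(1/r^2)$, compares with the Poincar\'e-type quantity $\|u-u_r\|_{L_2}^2$ (where $u_r$ is a ball average), bounds $\|u_r\|_{L_2}$ via $\|u\|_{L_1}$, and then optimizes in $r$ through a two-case analysis. You instead work with $\form^{f(|\cdot|^2)}$ on the Fourier side via the standard low/high-frequency split, which is shorter and avoids the case distinction. A pleasant by-product is that your argument yields \eqref{n222} with $\delta=0$, so Proposition~\ref{nash2} gives the on-diagonal bound for \emph{all} $t>0$; the paper's Case~2 only delivers \eqref{n222} with some $\delta>0$ (the $\|u\|_{L_2}^2$ term on the right is needed there to absorb the regime $\|u\|_{L_1}^2\ge h(1)\,\form^{\psi_1}(u,u)$), which explains the restriction $0<t\le 1$ in \eqref{pf1}. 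The paper's physical-space argument, on the other hand, does not rely on Plancherel and so transports to jump-type Dirichlet forms with $x$-dependent kernels; for the translation-invariant situation at hand your Fourier argument is the cleaner one.
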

\begin{proof}
We check \eqref{n222} for the Dirichlet form $\form^{\psi_1}$. Let $u\in L_\infty(\rn)\cap L_1(\rn)$ and fix some $0<r<1$.  Then
\begin{align*}
    \form^{\psi_1}(u,u)
    &\geq \int_{\rn}\int_{B(x,r)}\frac{|u(x)-u(y)|^2 }{|x-y|^n} \, f\left( \frac{1}{|x-y|^2}\right) dy\, dx\\
    &\geq f\left(\frac{1}{r^2}\right)\frac{1}{r^n} \int_{\rn}\int_{B(x,r)} |u(x)-u(y)|^2 \, dy\, dx\\
    &\geq c\,f\left( \frac{1}{r^2}\right)\frac{1}{r^{2n}} \int_{\rn}\left| \int_{B(x,r)} (u(x)-u(y)) \, dy \right|^2 dx\\
    &\geq  c'\, f\left( \frac{1}{r^2}\right)\int_{\rn} |u(x)- u_r|^2  \, dx\\
    &=c'  f\left( \frac{1}{r^2}\right) \|u(\cdot)-  u_r\|^2_{L_2},
\end{align*}
where $u_r := \tau_n^{-1}r^{-n}\int_{B(x,r)} u(y)\, dy$ and  $\tau_n = \pi^{\frac{n}{2}} / \Gamma(1+\frac{n}{2})$ is the volume of the unit ball $B(0,1)$ in $\rn$. Now observe that
\begin{gather*}
    \|u_r\|_{L_1}
    \leq \int_\rn \frac 1{\tau_n\,r^n} \int_{B(x,r)} |u(y)|\, dy\,dx
    = \frac 1{\tau_n\,r^n} \int_{B(0,r)} \int_\rn |u(x+y)|\,dx\,dy
    = \|u\|_{L_1}
\end{gather*}
and that $\|u_r\|_{L_\infty} \leq \tau_n^{-1} r^{-n}\|u\|_{L_1}$. By H\"{o}lder's inequality we get
\begin{equation*}
    \|u_r\|_{L_2}^2
    \leq \|u_r \|_{L_\infty} \cdot \| u_r\|_{L_1}
    \leq \tau_n^{-1}\,\frac{\|u\|_{L_1}^2}{r^n},
\end{equation*}
and so
\begin{align*}
    \|u\|_{L_2}^2
    =\|u-u_r+u_r \|_{L_2}^2
    \leq 2\,\big(\|u-u_r\|_{L_2}^2 +\|u_r\|_{L_2}^2\big)
    \leq 2\,\|u-u_r\|_{L_2}^2 +2\tau_n^{-1}\, \frac{\|u\|_{L_1}^2}{r^{n}}.
\end{align*}
Thus, we arrive at
\begin{equation}\label{n3}
    \|u\|_{L_2}^2
    \leq \frac{c}{f(1/r^2)}\,\form^{\psi_1}(u,u)+c\,r^{-n}\,\|u\|_{L_1}^2.
\end{equation}
We will now distinguish between two cases. Let $h(r) := r^n / f(r^{-2})$.

\medskip
\emph{Case 1.}  Suppose that $\|u\|_{L_1}^2 < h\big(1)\,\form^{\psi_1}(u,u)$. Since
$$
    \lim_{r\to 0} h(r) = \lim_{x\to\infty} (x^n\,f(x^2))^{-1}=0,
$$
the equation $h(r) = \|u\|_{L_1}^2 /\form^{\psi_1}(u,u)$ has a solution $r_0\in (0,1)$. Substituting $\form^{\psi_1}(u,u)=\|u\|_{L_1}^2 / h(r_0)$ into \eqref{n3} with $r\equiv r_0$  yields
$$
    \|u\|_{L_2}^2
    \leq 2c\,r_0^{-n}\,\|u\|_{L_1}^2
    = \frac{\|u\|_{L_1}^2 }{\left[h^{-1}\left(\dfrac{\|u\|_{L_1}^2}{\form^{\psi_1}(u,u)}\right)\right]^n}.
$$
A few elementary rearrangements give
$$
    \form^{\psi_1}(u,u)
    \geq \frac{\|u\|_{L_1}^2}{h\left(\Big(\frac{\|u\|_{L_1}}{\|u\|_{L_2}}\Big)^{2/n}\right)}
$$
which becomes \eqref{n222} if we express $h$ in terms of $f$.

\medskip
\emph{Case 2.} $\|u\|_{L_1}^2 \geq h(1) \,\form^{\psi_1}(u,u)$ where $c$ is the constant appearing in \eqref{n3}. Take $r=1$ in \eqref{n3}. Then
$$
    \|u\|_{L_2}^2\leq 2c\, \|u\|_{L_1}^2,
$$
and by monotonicity $f\Big( \Big(\frac{\|u\|_{L_2}^2}{\|u\|_{L_1}^2}\Big)^{4/n}\Big) \leq C$. This implies \eqref{n222}, and by Proposition \ref{nash2} we get \eqref{pf1}.
\end{proof}

\bigskip
There is a related situation where we can apply Proposition \ref{nash2}. Consider a L\'evy process with the symbol
\begin{equation*}
    \psi_\infty(\xi):=\int_{\rn} (1-\cos(\xi\cdot y))\,g\left(\frac{1}{|y|^2}\right) \frac{dy}{|y|^n}
\end{equation*}
where $g$ is a complete Bernstein function. By \cite[Theorem 3.5]{JS}, the corresponding Dirichlet form $\form^{\psi_\infty}(u,u)$ is equivalent to the Dirichlet form
\begin{equation*}
    \form^{\mu(|\cdot|^2)} (u,u)
    :=\int_{\rn} \mu(|\xi|^2)|\widehat{u}(\xi) |^2\, d\xi,
\end{equation*}
where
\begin{equation*}
    \mu(t):=\int_0^t \int_r^\infty \frac{f(s)}{s^2}\, ds\, dr
\end{equation*}
is also a complete Bernstein function. It is easy to see that the Dirichlet form $\form^{\psi_\infty}(u,u)$ satisfies \eqref{n222} with $\delta=0$. Hence we get
\begin{lemma}
    Let $g$ and $\mu$ be as above and let $X$ be the L\'evy process with symbol $\psi_\infty(\xi)$ (or $\mu(|\xi|^2)$). Then $X$ has a transition density $p_t(x)$ and  there exist suitable constants $c, \gamma >0$ such that
\begin{equation}
    p_t(x)\leq c \left[ g^{-1}\left(\frac{1}{\gamma t}\right)\right]^{n/2}, \quad \text{for all $0<t<\infty$, $x\in\rn$}.\label{pf2}
\end{equation}
\end{lemma}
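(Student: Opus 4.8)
The plan is to establish the Nash inequality \eqref{n222} for the Dirichlet form $\form^{\psi_\infty}$ with $f=g$ and $\delta=0$, and then to invoke Proposition~\ref{nash2}; existence of a bounded transition density and the bound \eqref{pf2} are then immediate, and the assertion for the symbol $\mu(|\xi|^2)$ follows from the equivalence of the two Dirichlet forms. For the Nash inequality I would repeat, with only cosmetic changes, the computation in the proof of Lemma~\ref{nash}, now applied to
$$
    \form^{\psi_\infty}(u,u)=\frac12\int_{\rn}\int_{\rn}|u(x)-u(y)|^2\,g\!\left(\frac{1}{|x-y|^2}\right)\frac{dy\,dx}{|x-y|^n}.
$$
The decisive difference is that the jump kernel $y\mapsto g(|y|^{-2})|y|^{-n}$ is strictly positive on \emph{all} of $\rn\setminus\{0\}$, so the chain of estimates
$$
    \form^{\psi_\infty}(u,u)\geq\int_{\rn}\int_{B(x,r)}\frac{|u(x)-u(y)|^2}{|x-y|^n}\,g\!\left(\frac{1}{|x-y|^2}\right)dy\,dx\geq c'\,g\!\left(\frac1{r^2}\right)\|u-u_r\|_{L_2}^2,
$$
with $u_r:=\tau_n^{-1}r^{-n}\int_{B(x,r)}u(y)\,dy$ as in Lemma~\ref{nash}, now holds for \emph{every} $r>0$ rather than only for $r\in(0,1)$. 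Combining this with $\|u\|_{L_2}^2\leq 2\|u-u_r\|_{L_2}^2+2\tau_n^{-1}r^{-n}\|u\|_{L_1}^2$ yields the analogue of \eqref{n3}, valid for all $r>0$ and with $g$ in place of $f$.

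Writing $h(r):=r^n/g(r^{-2})$, one checks that $h$ is a continuous, strictly increasing bijection of $(0,\infty)$ onto itself: $(\log h)'(r)=n/r+2r^{-3}g'(r^{-2})/g(r^{-2})\geq n/r>0$ because $g'\geq0$, while the limits $h(0+)=0$ and $h(\infty)=\infty$ hold since $g$ is a non-constant Bernstein function. Consequently the case distinction of Lemma~\ref{nash} collapses to its Case~1, the equation $h(r_0)=\|u\|_{L_1}^2/\form^{\psi_\infty}(u,u)$ is always solvable, and the same elementary rearrangements as there, together with the subadditivity of $g$ (used to absorb a multiplicative constant inside $g$), produce \eqref{n222} for $\form^{\psi_\infty}$ with $\delta=0$.

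Next I would verify that $g$ meets the hypotheses of Proposition~\ref{nash2}: $g$ is $C^\infty$ on $(0,\infty)$ and, as a non-constant Bernstein function, strictly increasing and subadditive (indeed $g(s)+g(t)-g(s+t)\geq g(0)\geq0$), while $g'(x)\leq g(x)/x$ is precisely \eqref{derf} with $k=1$. Proposition~\ref{nash2} (with $f=g$, $\delta=0$) then gives $\|P_t\|_{L_1\to L_\infty}\leq[g^{-1}(1/(\gamma t))]^{n/2}$ for all $t>0$, with the convention $g^{-1}(s):=+\infty$ when $s\geq\sup g$. Since $\|P_t\|_{L_1\to L_\infty}<\infty$, the convolution operator $P_t$ is represented by a bounded function $p_t$, which is the transition density, and $p_t(x)\leq\|P_t\|_{L_1\to L_\infty}$, i.e.\ \eqref{pf2}. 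Because $\form^{\psi_\infty}\asymp\form^{\mu(|\cdot|^2)}$ by \cite[Theorem~3.5]{JS}, the Nash inequality \eqref{n222} holds for $\form^{\mu(|\cdot|^2)}$ as well, so the conclusion also covers the process with symbol $\mu(|\xi|^2)$.

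I do not expect a genuinely hard step here: the whole argument is a streamlined version of the proof of Lemma~\ref{nash}, the streamlining being that the global support of the jump kernel removes the second case and permits $\delta=0$. The only point calling for a little care is the behaviour of $g^{-1}$ at the endpoints of the range of $g$: when $g$ is a \emph{bounded} Bernstein function, $\psi_\infty$ may grow only logarithmically, so $p_t$ is bounded and \eqref{pf2} is informative only once $1/(\gamma t)<\sup g$, i.e.\ for large $t$; in the standard situations $g$ is unbounded (e.g.\ $g(t)=t^\alpha$ with $0<\alpha<1$, which gives $\psi_\infty(\xi)\asymp|\xi|^{2\alpha}$ and the familiar estimate $p_t(x)\leq c\,t^{-n/(2\alpha)}$) and the right-hand side of \eqref{pf2} is finite for every $t>0$.
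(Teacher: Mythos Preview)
Your proposal is correct and follows exactly the approach the paper has in mind: the paper's entire ``proof'' is the sentence preceding the lemma, namely that $\form^{\psi_\infty}$ satisfies \eqref{n222} with $\delta=0$, after which Proposition~\ref{nash2} applies. You have supplied precisely the details behind that sentence---rerunning the calculation of Lemma~\ref{nash} and observing that the global support of the jump kernel makes the estimate valid for all $r>0$, so Case~2 disappears and $\delta=0$---and then invoked Proposition~\ref{nash2} as intended.
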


\section{An off-diagonal estimate for the transition density of L\'evy processes}\label{sec2}
In this section we prove an off-diagonal upper estimate for the transition density of a class of L\'evy processes. Let $X_t$ be a L\'evy process with symbol $\psi(\xi)$ and convolution semigroup $\mu_t(dy)$, $t\geq 0$. It is known that $\mu_t(dy)$ has a density $p_t(dy)$ with respect to Lebesgue measure if, and only if, $T_t f := f\star \mu_t$ is continuous for all Borel measurable functions $f$, cf.\ \cite[Lemmas 4.8.19, 4.8.20]{J01}. Apart from this criterion there are no good necessary and sufficient criteria for the existence of a transition density $p_t(x)$. Assume that $p_t(x)$ exists; since $\widehat p_t(\xi) = e^{-t\psi(\xi)}$, we have necessarily
\begin{equation}\label{den1}
    p_t(x)= \int_{\rn} e^{i\xi\cdot x -t\psi(\xi)}\, d\xi, \quad x\in\rn,\; t>0.
\end{equation}
This shows that the growth of $\psi$ as $|\xi|\to\infty$, e.g.\  $\psi(\xi)\geq |\xi|^\kappa$ for some $0<\kappa<2$, guarantees that \eqref{den1} converges absolutely. More generally, one has sufficient conditions due to Hartman-Wintner \cite{HW} and Kallenberg \cite{Ka}, see Bodnarchuk and Kulik \cite{BKu} for the $n$-dimensional situation.

Throughout we will assume that the L\'evy process $X_t$ has a transition density $p_t(x)$ for all $t>0$ and that the  L\'evy measure $\nu(dy)$ satisfies
\begin{align}\tag{\bfseries A1}\label{A1}
    \int_{|y|\geq 1}  e^{\alpha \cdot  y}\,\nu(dy)<\infty \quad
    \text{for all $\alpha\in\rn$}.
\end{align}
Note that \eqref{A1} is equivalent to saying that the L\'evy process has exponential moments, cf.\ Sato \cite[Theorem 25.3]{Sa}. Under \eqref{A1} the function
\begin{equation*}
    w(z)
    :=\frac{1}{2}\int_{\rn} \big(e^{z\cdot y}-1\big) \big(e^{-z\cdot y}-1\big)\, \nu(dy)
    =\int_{\rn}\big(1-\cosh(z\cdot y)\big)\,\nu(dy),
\end{equation*}
exists for any  $z\in \comp^n$, $z_j=\xi_j+i\eta_j$, $j=1,\ldots ,n$. In what follows, we write $\xi=(\xi_1,\ldots ,\xi_n)$, $\eta=(\eta_1,\ldots,\eta_n)$. Observe, that for $\xi=0$, i.e.\ $z=i\eta$, we have
\begin{equation}\begin{aligned}\label{Gam2}
    w(i\eta)
    &=\frac{1}{2}\int_{\rn}\big(e^{i\eta\cdot y}-1\big)\big(e^{-i\eta\cdot y}-1\big)\, \nu(dy)\\
    &=\int_{\rn}(1-\cos(y\cdot \eta))\,\nu(dy)
    =\psi(\eta).
\end{aligned}\end{equation}
This means that we have $w(\xi)=\Gamma(e^{ \xi\cdot }, e^{- \xi\cdot})$ for $\xi\in \rn$; here $\Gamma(u,u)$ is the carr\'e du champ operator associated with $\psi(\xi)$ by
\eqref{Gam1}.

Let us briefly recall  Carlen, Kusuoka and Stroock's upper bound for the transition density $p_t(x,y)$ of a general symmetric Markov process; for details we refer to \cite[Theorem 3.25]{CKS}.

\bigskip\noindent
{\itshape Let $(X_t)_{t\geq 0}$ be a symmetric Markov process given by a  regular Dirichlet form $(\form,D(\form))$ which admits a carr\'e du champ operator $\Gamma(\cdot,\cdot)$. Assume that there exists some $\phi\in D(\form)\cap L_\infty(\rn)\cap C_b(\rn)$ such that
\begin{gather*}
    \gamma(\phi)
    :=\sqrt{\left\| e^{-2\phi}\,\Gamma(e^\phi,e^\phi)\right\|_{L_\infty}}
    \vee \sqrt{\left\|e^{2\phi}\,\Gamma(e^{-\phi},e^{-\phi})\right\|_{L_\infty}}
    <\infty.
\end{gather*}
If the  Dirichlet form $(\form,D(\form))$ satisfies a Nash inequality with $f(x)=x^\alpha$,
i.e.\ if
$$
    \|u\|_{L_2}^{2\alpha / n}
    \leq A \, \big(\form(u,u)+\delta \|u\|_{L_2}^2 \big) \, \|u\|_{L_1}^{2\alpha /n},
$$
then the $(X_t)_{t\geq 0}$ has a transition density $p_t(x,y)$ and
\begin{equation}\label{off-d}
    p_t(x,y)\leq c_1\, t^{-n/\alpha}\, e^{c_2 t\gamma^2(\phi) -|\phi(x)-\phi(y)|+ c_3t}
\end{equation}
for all $t>0$ and almost all $x,y\in\rn$.}

\bigskip
Even in simple situations it is a non-trivial task to find concrete functions $\phi$ which also lead to reasonable estimates in \eqref{off-d}. Therefore we aim for a different way to derive a concrete off-diagonal upper bound for the transition density $p_t(x)$ of a L\'evy process. Let
\begin{equation*}
    Q_t(\xi,x):=i\xi\cdot x-t w(i\xi)=i\xi\cdot x-t\psi(\xi).
\end{equation*}
Observe that $Q_t(i\xi,x)$ resembles the exponent of the upper bound \eqref{off-d}. It is therefore a natural question whether it is possible to apply complex analysis techniques to get off-diagonal upper bounds similar to those in \eqref{off-d}.

Let us look closely at the properties of $w$. For  $\xi\in\rn$ we have:
\begin{enumerate}
\item[i)]
    $w|_{\rn}$ is even and $\nabla w(\xi) |_{\xi=0} =0$;
\item[ii)]
    $w(z)$ is an analytic function in each variable $z_j\in \comp$, $j=1,\ldots,n$; by Hartogs' theorem it is analytic in $\comp^n$.
\item[iii)]
    $w|_\rn$ is concave, i.e.\ for all $\xi,\xi'\in\rn$ and $t\in (0,1)$ one has $w(t \xi+(1-t)\xi')\geq t\, w(\xi)+(1-t)w(\xi')$. This follows directly from H\"{o}lder's inequality.
\item[iv)]
    $w|_\rn$ attains its maximum at $\xi=0$. This follows immediately from i) and iii).
\end{enumerate}
Consider the function
\begin{equation}\label{vt}
    v_t(\xi,x):=-\xi\cdot x-tw(\xi), \quad \xi\in\rn.
\end{equation}
Due to iii) it attains its minimum in $\xi$ at some point $\xi_0= \arg \min_{\xi} v_t(\xi,x)$. Since $v_t(0,x)=0$, we have $v_t(\xi_0,x)\leq 0$ for all $x\in\rn$, and $v_t(\xi_0,0)=0$. The following function plays a key role in the off-diagonal upper estimate. Write
\begin{equation}\label{D}
    D_t^2(x):=-v_t(\xi_0,x)
    \quad\text{where}\quad
    \xi_0 = \xi_0(t,x) = \arg\min_\xi v_t(\xi,x).
\end{equation}

\begin{theorem}\label{pr}
    Let $(X_t)_{t\geq 0}$ be a L\'evy process with symbol $\psi(\xi)$. Assume that $X_t$ has for all $t>0$ a transition density and assume that $\psi$ satisfies the Assumption \eqref{A1}.  Then
\begin{equation}\label{up1}
    p_t (x)\leq e^{-D_t^2(x)} p_t(0)
    \quad\text{for all}\quad x\in\rn,\; t>0,
\end{equation}
    where $D_t^2(x)$ is given by \eqref{D}.
\end{theorem}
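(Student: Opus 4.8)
The plan is to work from the Fourier representation \eqref{den1} of the density and to \emph{shift the contour of integration}. Since $w$ is analytic on all of $\comp^n$ (property ii) and $w(i\xi)=\psi(\xi)$ by \eqref{Gam2}, the integrand $z\mapsto e^{iz\cdot x-tw(iz)}$ is entire, so for any fixed $\theta\in\rn$ Cauchy's theorem should give
$$
    p_t(x)=\int_{\rn}e^{i\xi\cdot x-t\psi(\xi)}\,d\xi
    =e^{-\theta\cdot x}\int_{\rn}e^{i\xi\cdot x-tw(\theta-i\xi)}\,d\xi ,
$$
where I have also used that $w$ is even (property i) to tidy up the argument of $w$. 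I would carry out the shift one real coordinate at a time, through rectangular contours, deferring the justification to the end.

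The reason this helps is an elementary estimate of the modulus of the new integrand. Using $\cosh(a-ib)=\cosh a\cos b-i\sinh a\sin b$ one gets $\Re w(\theta-i\xi)=\int_{\rn}\bigl(1-\cosh(\theta\cdot y)\cos(\xi\cdot y)\bigr)\,\nu(dy)$. Since $\cos(\xi\cdot y)\le 1$ and $1-\cosh(\theta\cdot y)\le 0$,
$$
    1-\cosh(\theta\cdot y)\cos(\xi\cdot y)
    =\bigl(1-\cos(\xi\cdot y)\bigr)+\cos(\xi\cdot y)\bigl(1-\cosh(\theta\cdot y)\bigr)
    \ge\bigl(1-\cos(\xi\cdot y)\bigr)+\bigl(1-\cosh(\theta\cdot y)\bigr),
$$
and integrating against $\nu$ yields $\Re w(\theta-i\xi)\ge\psi(\xi)+w(\theta)$, hence $|e^{-tw(\theta-i\xi)}|\le e^{-tw(\theta)}e^{-t\psi(\xi)}$; in particular the shifted integral above converges absolutely. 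Taking moduli in the shifted representation and then using \eqref{den1} at $x=0$ gives, for \emph{every} $\theta\in\rn$,
$$
    p_t(x)\le e^{-\theta\cdot x}\int_{\rn}e^{-t\Re w(\theta-i\xi)}\,d\xi
    \le e^{-\theta\cdot x-tw(\theta)}\int_{\rn}e^{-t\psi(\xi)}\,d\xi
    =e^{-\theta\cdot x-tw(\theta)}\,p_t(0).
$$

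It then remains to pick the best $\theta$. Recalling that $v_t(\cdot,x)$ attains its minimum at $\xi_0=\xi_0(t,x)$ (as noted after \eqref{vt}, using concavity of $w$), and that $D_t^2(x)=-v_t(\xi_0,x)=\xi_0\cdot x+tw(\xi_0)$ by \eqref{D}, the choice $\theta=\xi_0$ turns the last display into $p_t(x)\le e^{-(\xi_0\cdot x+tw(\xi_0))}p_t(0)=e^{-D_t^2(x)}p_t(0)$, which is \eqref{up1}. (Indeed $\theta=\xi_0$ is optimal in this family of bounds, since minimising $v_t(\cdot,x)$ is the same as maximising $\theta\mapsto\theta\cdot x+tw(\theta)$; if the minimum were not attained one would pass to a minimising sequence and read off $D_t^2(x)=-\inf_\theta v_t(\theta,x)$.)

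The step I expect to be the main obstacle is the rigorous justification of the contour shift, i.e.\ showing that the integrals over the lateral faces of the rectangles vanish in the limit. The inequality above is again the workhorse: uniformly for the imaginary shift ranging over a bounded region one has $|e^{-tw(a+i\xi)}|\le e^{tC_\theta}e^{-t\psi(\xi)}$ with $C_\theta:=\sup_{|b|\le|\theta|}|w(b)|<\infty$ by \eqref{A1}, and since $e^{-t\psi}\in L_1(\rn)$ — which is exactly what makes \eqref{den1} an absolutely convergent integral — its lower-dimensional slices tend to $0$ along a suitable sequence $R_k\to\infty$; this, together with dominated convergence, closes the argument, the rest being routine Fubini and elementary inequalities. (Alternatively, the same bound can be obtained probabilistically via the Esscher transform $d\Pp_\theta/d\Pp|_{\mathcal F_t}=e^{\theta\cdot X_t+tw(\theta)}$: under $\Pp_\theta$ the process is again L\'evy, its transition density is $e^{\theta\cdot x+tw(\theta)}p_t(x)$, and its supremum is controlled by the integral of its characteristic function, which by the very same inequality is at most $p_t(0)$.)
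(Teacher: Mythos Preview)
Your argument is correct and is essentially the paper's own proof: both shift the contour in \eqref{den1} by $i\xi_0$ via Cauchy's theorem, establish the same real-part inequality $\Re w(\xi_0-i\xi)\ge \psi(\xi)+w(\xi_0)$ (the paper writes it as $\Re Q_t(\xi+i\xi_0,x)\le v_t(\xi_0,x)-t\psi(\xi)$, which is identical), and then bound by $e^{v_t(\xi_0,x)}\int e^{-t\psi}=e^{-D_t^2(x)}p_t(0)$. The only cosmetic differences are that the paper packages the shift as a single application of the Cauchy--Poincar\'e theorem on an $(n{+}1)$-dimensional box rather than one coordinate at a time, and disposes of the lateral faces by the pointwise bound $|e^{Q_t(M+is\xi_0,x)}|\le e^{-t\psi(M)}\cdot(\text{bdd})$ instead of your subsequence argument.
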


For the proof of Theorem \ref{pr} we need a few properties of the function $D_t^2$.
\begin{lemma}
\begin{enumerate}
\item[\upshape i)]
    For all $t>0$, the function $x\mapsto D_t^2(x), x\in\rn$ is even and increasing as  $|x|\to\infty$;
\item[\upshape ii)]
    $D_t(x)=0$ if and only if $x=0$.
\item[\upshape iii)]
    $\displaystyle D_t^2(x)\leq \frac{|x|^2}{4ct}$ for all $x\in\rn$ and $t>0$.
\end{enumerate}
\end{lemma}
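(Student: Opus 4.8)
The plan is to work throughout with the dual description
$D_t^2(x) = -v_t(\xi_0,x) = \max_{\xi\in\rn}\bigl(\xi\cdot x+t\,w(\xi)\bigr)$,
which is legitimate because $\xi_0$ is the minimiser of $v_t(\cdot,x)$ and, by concavity of $w|_\rn$, the maximum is attained (as already observed before the theorem). In particular $x\mapsto D_t^2(x)$ is, for each fixed $t>0$, a pointwise supremum of affine functions, hence finite-valued and \emph{convex}; this, together with the evenness of $w$ and the normalisation $w(0)=0$, $\nabla w(0)=0$, $w|_\rn\le 0$, is what drives all three parts.

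\emph{Part i).} Substituting $\xi\mapsto-\xi$ in the maximum and using that $w$ is even gives $D_t^2(-x)=D_t^2(x)$. Since $D_t^2$ is convex and even, its restriction to any ray $r\mapsto D_t^2(re)$ ($|e|=1$, $r\ge 0$) is a convex even function on the line, hence non-decreasing in $r$ — this is what ``increasing as $|x|\to\infty$'' should mean here — and $D_t^2(x)\to\infty$ follows by retaining only unit vectors in the maximum: $D_t^2(x)\ge\sup_{|\xi|=1}\bigl(\xi\cdot x+tw(\xi)\bigr)\ge|x|-t\max_{|\xi|=1}|w(\xi)|$, the last maximum being finite by continuity of $w$.

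\emph{Part ii).} If $x=0$, then $v_t(\xi,0)=-t\,w(\xi)\ge 0$ with equality at $\xi=0$, so $D_t^2(0)=0$. Conversely, suppose $x\neq0$ and write $e=x/|x|$. For every $\varepsilon>0$, $0=D_t^2(x)\ge \varepsilon\,e\cdot x+t\,w(\varepsilon e)=\varepsilon|x|+t\,w(\varepsilon e)$, hence $|x|\le -t\,w(\varepsilon e)/\varepsilon$; letting $\varepsilon\downarrow0$ and using $w(0)=0$, $\nabla w(0)=0$ (property i) for $w$), the right-hand side tends to $0$, a contradiction. Thus $D_t(x)=0$ forces $x=0$.

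\emph{Part iii)} is the only quantitative point, and I expect it to be the main (though modest) obstacle. Using $\cosh s-1\ge s^2/2$ and that \eqref{A1} makes the second moment of $\nu$ finite,
$-w(\xi)=\int_\rn\bigl(\cosh(\xi\cdot y)-1\bigr)\,\nu(dy)\ge\tfrac12\,\xi^\top\!\Bigl(\int_\rn yy^\top\,\nu(dy)\Bigr)\xi\ge c\,|\xi|^2$,
where $2c>0$ is the smallest eigenvalue of the (finite, positive definite) covariance matrix of $\nu$; this is precisely where one must invoke the non-degeneracy of $\nu$ that is implicit in the assumed existence of a density $p_t$ on $\rn$. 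Given $-w(\xi)\ge c|\xi|^2$, one concludes
$D_t^2(x)=\max_\xi\bigl(\xi\cdot x+t\,w(\xi)\bigr)\le\max_\xi\bigl(\xi\cdot x-c\,t\,|\xi|^2\bigr)=\frac{|x|^2}{4ct}$,
the right-hand maximum being attained at $\xi=x/(2ct)$. So the genuine work is confined to fixing the constant $c$ through this quadratic lower bound for $-w$; parts i) and ii) are soft consequences of convexity, evenness, and the behaviour of $w$ at the origin.
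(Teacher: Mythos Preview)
Your argument is correct, and for part iii) it is essentially identical to the paper's: both establish the quadratic lower bound $-w(\xi)\ge c|\xi|^2$ (the paper invokes ``Taylor's theorem'', you spell out $\cosh s-1\ge s^2/2$ and identify $2c$ as the smallest eigenvalue of the second-moment matrix of $\nu$) and then optimise the resulting quadratic upper bound for $\xi\cdot x-ct|\xi|^2$.

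Where you differ is in parts i) and ii). The paper proves evenness by tracking the minimiser, showing $\xi_0(t,-x)=-\xi_0(t,x)$ and then that $v_t(\xi_0(t,x),x)=v_t(\xi_0(t,-x),-x)$; for monotonicity it simply asserts that this ``follows directly from the definition'', and for ii) it says the equivalence is ``obvious'' from $v_t(\xi_0,x)\le 0$ and $v_t(\xi_0,0)=0$. You instead exploit the supremum representation $D_t^2(x)=\sup_\xi(\xi\cdot x+tw(\xi))$ to see at once that $D_t^2$ is convex (supremum of affines) and even (substitute $\xi\mapsto-\xi$), whence monotone along rays; and for ii) you give a genuine argument for the non-trivial direction by testing with $\xi=\varepsilon x/|x|$ and using $\nabla w(0)=0$. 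Your route is cleaner and, in ii), actually fills a gap that the paper leaves open. The only point worth flagging is that your justification of $c>0$ in iii) relies on non-degeneracy of $\int yy^\top\,\nu(dy)$; you correctly link this to the assumed existence of a density on $\rn$, which the paper takes for granted without comment.
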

\begin{proof}
i) Since $x\mapsto\Gamma(e^{\xi\cdot x}, e^{-\xi\cdot x})$ is even,
$$
    \min_\xi v_t(\xi,x)=\min_\xi v_t(-\xi,-x)=\min_{-\xi} v_t(\xi,-x),
$$
and therefore $\xi_0(t,x)=-\xi_0(t,-x)$. This can be used to show
\begin{align*}
    v_t(\xi_0(t,x),x)
    &=-\xi_0(t,x)\cdot x-tw(\xi_0(t,x))\\
    &=\xi_0(t,-x)\cdot x -t w(-\xi_0(t,x)) \\
    &=-\xi_0(t,-x)\cdot (-x) -tw(\xi_0(t,-x))\\
    &=v_t(\xi_0(t,-x),-x),
\end{align*}
which implies that $x\mapsto D_t^2(x)$ is even. That $D_t^2(x)$ increases as $|x|\to\infty$ follows directly from the definition.

\medskip
ii) Since $v_t(\xi_0,x)$ is non-positive and since $v_t(\xi_0,0)=0$ it is obvious that $D_t(x)=0$ if, and only if, $x=0$.

\medskip
iii) By Taylor's theorem there exists a constant $c>0$ such that
$$
    \int_{\rn} \big(\cosh(\xi\cdot y)-1\big) \nu(dy)\geq c |\xi|^2
    \quad\text{for all $\xi\in\rn$}.
$$
Then for all $x\in\rn$
$$
    v_t(\xi,x)=-\xi\cdot x- tw(\xi) \geq -\xi\cdot x+ct|\xi|^2;
$$
if we minimize this expression, we get $-D_t^2(x) = v_t(\xi_0,x)\geq-|x|^2/(4ct)$.
\end{proof}

We are now ready for the
\begin{proof}[Proof of Theorem \ref{pr}]
    In order to estimate  $\int_{\rn}e^{Q_t(\xi,x)}\, d\xi$ we apply the Cauchy-Poincar\'e theorem. Since $w$ can be extended analytically to $\comp^n$, the function $Q_t(z,x)$ is analytic in $z\in\comp^n$. Without loss of generality we may assume that $\xi_0>0$, in the case $\xi_0<0$ the arguments are similar. Consider the domain
\begin{equation*}
    G
    :=\left\{z\in \comp^n \::\:   \Im z =t\xi_0,\; 0\leq t\leq 1,\; \Re z\in \prod_{j=1}^n [-M_j,M_j],\; M_j>0,\;1\leq j\leq n \right\}.
\end{equation*}
This is an $n+1$-dimensional cube with base $\left\{z\in \comp^n\::\: \Re z\in \prod_{j=1}^n [-M_j,M_j], \; \Im z=0\right\}$ and lid $\left\{z\in \comp^n\::\: \Re z\in \prod_{j=1}^n [-M_j,M_j],\; \Im z=\xi_0\right\}$. Since the number of sides of $G$ is even, we can fix some orientation on $\partial G$ such that base and lid have opposite orientation. 
By the Cauchy-Poincar\'e theorem
\begin{equation}\label{inc}
    \int_{\partial G} e^{Q_t(z,x)}\, dz_1 \wedge dz_2\wedge \cdots \wedge dz_n=0.
\end{equation}
Consider the integrals over the sides (except the base and the lid)
\begin{equation}\label{c2}
    \int_0^1 e^{Q_t(M\pm is\xi_0, x)}ds,
    \quad \text{where}\quad M=(\pm M_1,\ldots,\pm M_n)
\end{equation}
and recall that $Q_t(\xi,x)=i\xi\cdot x - tw(i\xi)$. After some rearrangements we get
\begin{align*}
    &\Re Q_t(M+i\eta,x)\\
    &= \Re \left[ i(M+i\eta)\cdot x-\frac{t}{2} \int_{\rn}\big(e^{i(M+i\eta)\cdot y}-1\big) \big(e^{-i(M+i\eta)\cdot y}-1\big)\,\nu(dy)\right]\\
    &=-\eta\cdot x -t \int_{\rn} \big(1-\cosh(\eta\cdot y)\big)\,\nu(dy)-t \int_{\rn} \cosh(\eta\cdot y)(1-\cos (M\cdot y))\,\nu(dy)\\
    &\leq -t\psi(M)-\eta\cdot x -t \psi(i\eta).
\end{align*}
Therefore,
\begin{align*}
    \left| e^{Q_t(M\pm is \xi_0, x)}\right| \leq e^{-t\psi(M)-\eta\cdot x -t \psi(is \xi_0)},
\end{align*}
which means that the  integrands in \eqref{c2} tend, uniformly in $s$, to $0$ as $|M|\to \infty$. Therefore, \eqref{inc} becomes, as $|M|\to\infty$,
\begin{equation*}
    \int_{\rn} e^{Q_t(\xi,x)}\, d\xi
    =\int_{\rn} e^{Q_t(\xi+ i\xi_0,x)}\, d\xi.
\end{equation*}
Since
\begin{align*}
    \Re Q_t(\xi+ i\xi_0,x)
    &=v_t(\xi_0,x)-t\int_{\rn} \cosh(\xi_0\cdot y) (1-\cos (\xi\cdot y))\, \nu(dy)\\
    &\leq v_t(\xi_0,x) -t\psi(\xi),
\end{align*}
we finally get
\begin{gather*}
    \int_{\rn} e^{Q_t(\xi,x)}\, d\xi
    \leq e^{v_t(\xi_0,x)} \int_{\rn}e^{-t\psi(\xi)}\, d\xi
    = e^{-D_t^2(x)}p_t(0).
\qedhere
\end{gather*}
\end{proof}

We can combine Theorem \ref{pr} with the on-diagonal estimates from Section \ref{sec1}, e.g.\ with Lemma \ref{nash}. Note that all processes satisfying the assumptions of Lemma \ref{nash} automatically have transition densities.
\begin{corollary}\label{cor8}
    Let $(X_t)_{t\geq 0}$ be a L\'evy process with symbol $\psi_1$ as in \eqref{psi1} where $f$ is a Bernstein function without linear term and such that $f(0)=0$. Then  there exist suitable constants $c, \gamma >0$ such that
\begin{equation}\label{up-psi}
    p_t (x)\leq c e^{-D_t^2(x)} \left[ f^{-1}\left(\frac{1}{\gamma t}\right)\right]^{n/2},
    \quad\text{for all\ \ } 0<t\leq 1,\; x\in\rn.
\end{equation}
\end{corollary}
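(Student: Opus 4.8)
The plan is to combine the off-diagonal estimate of Theorem~\ref{pr} with the on-diagonal estimate of Lemma~\ref{nash}; the corollary is essentially a repackaging of these two results. First I would record precisely what Lemma~\ref{nash} supplies: since $f$ is a Bernstein function without linear term with $f(0)=0$ and (as in Lemma~\ref{nash}) $t\mapsto f(t)\,t^{-\kappa}$ is increasing as $t\to\infty$ for some $\kappa\in(0,1)$, the L\'evy process with symbol $\psi_1$ from \eqref{psi1} has a transition density $p_t(x)$, and there are constants $c>0$, $\gamma\in(0,1]$ with
$$
    p_t(x)\leq c\left[f^{-1}\left(\tfrac{1}{\gamma t}\right)\right]^{n/2}
    \qquad\text{for all }0<t\leq 1,\ x\in\rn;
$$
in particular this holds at $x=0$, which is the only value I will actually use.

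Next I would verify that Theorem~\ref{pr} is applicable to $\psi_1$. The existence of $p_t(x)$ for all $t>0$ is already part of Lemma~\ref{nash}, so the main point that requires attention — rather than a real difficulty — is checking Assumption~\eqref{A1}. But the L\'evy measure of $\psi_1$ is $\nu_1(dy)=f(|y|^{-2})\,|y|^{-n}\,\I_{B(0,1)}(y)\,dy$, which is concentrated in the unit ball; hence $\int_{|y|\geq 1}e^{\alpha\cdot y}\,\nu_1(dy)=0<\infty$ for every $\alpha\in\rn$, so \eqref{A1} holds trivially. For the same reason $\cosh(\xi\cdot y)$ is bounded on $\supp\nu_1$, so the functions $w$, $v_t$ and $D_t^2$ attached to $\psi_1$ are all well defined, and the $D_t^2$ appearing in the statement is precisely this one. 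Theorem~\ref{pr} then yields $p_t(x)\leq e^{-D_t^2(x)}\,p_t(0)$ for all $x\in\rn$ and $t>0$.

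Finally I would substitute the on-diagonal bound for $p_t(0)$ into $p_t(x)\leq e^{-D_t^2(x)}\,p_t(0)$, which produces \eqref{up-psi} with the same constants $c,\gamma$ as in Lemma~\ref{nash}; the restriction $0<t\leq 1$ is inherited from the on-diagonal estimate, whereas the off-diagonal factor $e^{-D_t^2(x)}$ is valid for every $t>0$. As noted, there is no genuine obstacle; the only points deserving a word of care are that $f$ should be assumed to meet the full hypotheses of Lemma~\ref{nash} (not merely to be a Bernstein function with $f(0)=0$), and that it is the compact support of $\nu_1$ which makes \eqref{A1} automatic, so that no integrability assumption on $f$ is needed beyond those already present in Lemma~\ref{nash}.
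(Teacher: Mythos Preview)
Your approach is exactly the paper's: note that the L\'evy measure of $\psi_1$ is supported in $B(0,1)$ so \eqref{A1} is trivially satisfied, then feed the on-diagonal bound of Lemma~\ref{nash} into Theorem~\ref{pr}. The one point where the paper sharpens your account concerns the growth condition $t\mapsto f(t)\,t^{-\kappa}$: you flag it as something that ought to be assumed, but the paper remarks that it is \emph{not} required for Corollary~\ref{cor8}, since in the proof of Lemma~\ref{nash} the Nash inequality is verified directly for $\form^{\psi_1}$, and the growth condition was used only in the preceding discussion to pass from $\form^{\psi_1}$ to the equivalent form $\form^{f(|\cdot|^2)}$.
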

Since the representing measure $\nu(dy)$ has compact support, \eqref{A1} is satisfied and the proof follows by Lemma~\ref{nash} and Theorem~\ref{pr}. Note that we do not require the growth condition imposed on $f$ in Lemma \ref{nash}, since this was only used to show the equivalence of the Dirichlet forms $\form^{\psi_1}$ and $\form^{f(|\cdot|^2)}$.

\begin{example}
    Let us indicate a few generic examples for our estimates. Since we require exponential moments, our L\'evy measures $\nu$ must satisfy $\int_{|y|\geq 1} e^{\xi\cdot y}\,\nu(dy) < \infty$ for all $\xi\in\real^n$. Typically, this can be achieved if $\supp\nu$ is bounded, see Examples i) and ii) below, or if $\nu$ has an exponentially fast decaying density, as in Example iii). For simplicity we restrict ourselves to the one-dimensional setting $n=1$ and estimates for $p_t(x)$ with $x/t\to\infty$, e.g.\ where $x\gg 1$ and $t>0$ is fixed.
    \begin{enumerate}
    \item[i)]
        Assume that $\supp\nu\subset[-1,1]$ and that $\nu(dy)=g(y)\,dy$ for some density function $g(y)$. This covers the situation of \eqref{psi1} and Corollary \ref{cor8}.

        For our heat kernel estimate we need to control the behaviour of the function $v_t(\xi_0,x)$ from \eqref{vt} at the point $\xi_0=\arg\min_\xi v_t(\xi,x)$. For symmetry reasons we only have to consider the case where $\xi >0$ and $x>0$. Clearly,
        $$
            v_t(\xi,x)
            \leq -\xi x + c_0 t \xi^2 e^\xi \int_{-1}^1 y^2\,g(y)\,\nu(dy)
            \leq -\xi x + c_1 t  e^{\xi(1+\varepsilon)}
        $$
        for some for some $\varepsilon >0$ and suitable constants $c_0$ and  $c_1$. We will now minimize the expression on the right-hand side. From
        $$
            \frac{\partial}{\partial \xi}\left(-\xi x + c_1 t  e^{\xi(1+\varepsilon)}\right)
            = -x  + t c_1 (1+\varepsilon)\, e^{\xi(1+\varepsilon)}
            \stackrel{!}{=} 0
        $$
        we find the critical point for the minimum of the right-hand side
        $$
            \xi=\frac{1}{1+\varepsilon}\ln \left(\frac{x}{t c_1 (1+\varepsilon)} \right),
        $$
        and so
        $$
            v_t(\xi_0,x)
            \leq \min_\xi \Big( -\xi x + c_1 t  e^{\xi(1+\varepsilon)}\Big)
            = -\frac{x}{1+\varepsilon}\ln \left(\frac{x}{t c_1 (1+\varepsilon)} \right) +\frac{x}{1+\varepsilon}.
        $$

        If, as in Corollary \ref{cor8}, $g(y)=f\big(|y|^{-2}\big)/|y|^n$ the upper bound becomes for $x\neq 0$
        \begin{align*}
            p_t(x)
            &\leq c\, e^{-D_t^2(x)} \left[ f^{-1}\left(\frac{1}{\gamma t}\right)\right]^{1/2}\\
            &\leq e^{-\frac{|x|}{1+\varepsilon}\ln \left(\frac{|x|}{t c_1 (1+\varepsilon)} \right) +\frac{|x|}{1+\varepsilon}} \left[ f^{-1}\left(\frac{1}{\gamma t}\right)\right]^{1/2}
        \end{align*}
        for suitable constants $\gamma, \varepsilon > 0$.

    \item[ii)]
        Assume that $\supp\nu\subset[-1,1]$ and that $\nu$ is discrete. Let us consider the case where $\nu(dy)=\sum_{n=0}^\infty 2^{\alpha n} \left(\delta_{2^{-n}}+\delta_{-2^{-n}}\right) $ and $0<\alpha<2$. Since $\nu_0 = \sum_{n=-\infty}^\infty 2^{\alpha n} (\delta_{2^{-n}}+\delta_{-2^{-n}})$ corresponds to the so-called \emph{$\alpha$-semi-stable  process}, see \cite[Example~13.3]{Sa}, we can use \cite[Proposition 24.20]{Sa} to get $\psi_0(\xi)\geq c\, |\xi|^\alpha$ for some $c>0$ and all $|\xi|\geq 1$. Since the characteristic exponents $\psi$ and $\psi_0$ corresponding to $\nu$ and $\nu_0$ satisfy $\psi(\xi) \asymp \psi_0(\xi)$ for large values of $|\xi|$, we also have $\psi(\xi)\geq c'\,|\xi|^\alpha$, $|\xi|\geq 1$. This means, in particular, that a transition density $p_t(x)$ exists.

        Note that all our calculations for the upper estimate for $v_t(\xi,x)$ from the first example remain valid and, what is more, up to the constant $c_1 = c_1(\nu)$ they depend only on the (size of the) support of the L\'evy measure, but not on the particular form of $\nu$. This means that we can also in this case estimate the transition density by
        $$
            p_t(x)
            \leq c_2\,{t^{-\frac{1}{\alpha}}}\, e^{-\frac{|x|}{1+\varepsilon}\ln \left(\frac{|x|}{t c_1 (1+\varepsilon)} \right) + \frac{|x|}{1+\varepsilon}}.
        $$

    \item[iii)]
        Assume that $\nu(dy)= \nu_0(dy) + \I_{|y|\geq 1}\,e^{-|y|^\beta}\,dy$ where $\beta>1$ and where $\nu_0$ is as in Example i) or ii).

        In this case the behaviour of $v_t(\xi,x)$ is determined by the tail of the measure. As before, we first estimate $v_t(\xi,x)$ from above for $x>0$ and $\xi>0$:
        \begin{equation*}
            v_t(\xi,x)
            \leq -\xi x + c_1 t \,e^{\xi (1+\varepsilon)} +c_2 t \int_1^\infty e^{\xi y-|y|^\beta}dy .
        \end{equation*}
        In order to find the asymptotics of $I_1(\xi):=\int_1^\infty e^{\xi y-|y|^\beta}\,dy$ we use the Laplace method, see \cite[\S 18, p.\ 58]{Cop}. It is known that for sufficiently smooth functions $h$ the integral $I_h(\xi):=\int_a^b e^{h(\xi,y)}\,dy$ where $a,b\in [-\infty,+\infty]$ satisfies
        $$
            I_h(\xi)\sim \sqrt{\frac{\pi}{2 |h''(\xi,y_0)|}} \, e^{h(\xi,y_0)}, \quad\text{as}\quad \xi\to\infty.
        $$
        In the expression above $y_0$ is the (unique) point where $h$ reaches its maximum. If we use $h(\xi,y)=\xi y -y^\beta$ and $y_0=\left(\frac{\xi}{\beta}\right)^{\frac{1}{\beta-1}}$, we get
        \begin{align*}
            h(\xi,y_0)
            &=-c_{\beta,1}\,\xi^{\frac{\beta}{\beta-1}}
            \quad\text{where}\quad
            c_{\beta,1}
            = (\beta-1)\beta^{\frac \beta{1-\beta}}\\
            |h''(\xi,y_0)|
            &= \beta(\beta-1) \beta^{\frac{1}{1-\beta}}\, \xi^{\frac{\beta-2}{\beta-1}},\\
            I_1(\xi)
            &\asymp c_{\beta,2}\,\xi^{\frac{2-\beta}{2(\beta-1)}}\, e^{c_{\beta,1}\xi^{\frac{\beta}{\beta-1}}}, \quad \xi\to\infty.
        \end{align*}
        Observe that for $\xi\to\infty$
        $$
            v_t(\xi,x)
            \leq f(\xi,x,t )= -\xi x + t\,c_{\beta,2}\,\xi^{\frac{2-\beta}{2(\beta-1)}} \, e^{c_{\beta,1}\xi^{\frac{\beta}{\beta-1}}}.
        $$
        The point where $f(\cdot,x,t)$ becomes extremal satisfies the equation
        \begin{equation*}
        \begin{aligned}
        &0\stackrel{!}{=} f'_\xi (\xi,x,t)\\
        &=-x+ t\,c_{\beta,2}\,\frac{2-\beta}{2(\beta-1)}\,\xi^{\frac{2-\beta}{2(\beta-1)}-1} e^{c_{\beta,1}\,\xi^{\frac{\beta}{\beta-1}}}
        +t \, c_{\beta,2} c_{\beta,1} \,\frac{\beta}{\beta-1}\, \xi^{\frac{\beta}{\beta-1} + \frac{2-\beta}{2(\beta-1)} -1}
        \,e^{c_{\beta,1}\,\xi^{\frac{\beta}{\beta-1}}}.
        \end{aligned}
        \end{equation*}
        Rather than solving this equation for $\xi$ explicitly, we determine the asymptotic behaviour of the solution as $x/t\to\infty$. Note that $f'_\xi (\xi,x,t)=0$ if, and only if,
        $$
            c_{\beta,2}\, \frac{2-\beta}{2(\beta-1)}\,\xi^{\frac{2-\beta}{2(\beta-1)}-1} \, e^{c_{\beta,1}\xi^{\frac{\beta}{\beta-1}}}
            + c_{\beta,2} c_{\beta,1}\, \frac{\beta}{\beta-1}\, \xi^{\frac{\beta}{\beta-1} + \frac{2-\beta}{2(\beta-1)} -1}\, e^{c_{\beta,1}\xi^{\frac{\beta}{\beta-1}}}
            =
            \frac{x}{t}.
        $$
        Taking logarithms on both sides we arrive at
        %
        $$
            \xi=
            \left(\frac{1}{c_{\beta,1}}\log \big(\frac{x}{t}\big)\right)^{\frac{\beta-1}{\beta}} + o\left(\log \frac{x}{t}\right)
            \quad\text{as}\quad \frac{x}{t}\to \infty,
        $$
        hence,
        $$
            v_t(\xi_0,x)
            \leq -(1-\varepsilon) x\, \left(\frac{1}{c_{\beta,1}}\,\log \left(\frac{x}{t}\right)\right)^{\frac{\beta-1}{\beta}}.
        $$
        as well as
        $$
            p_t(x)
            \leq p_t(0) e^{ -(1-\varepsilon) x \left(\frac{1}{c_{\beta,1}}\,\log \left(\frac{x}{t}\right)\right)^{\frac{\beta-1}{\beta}}}.
        $$
        With considerably more effort it is possible to obtain the exact asymptotics of $v_t(\xi_0,x)$ and $p_t(x)$, see \cite[Proposition~6.1]{KK} by Kulik and one of the present authors.
    \end{enumerate}
\end{example}

\section{An application to large deviations}\label{sec3}
In this section we will show an application of Theorem~\ref{pr} to the theory of large deviations. We show that the transition density \eqref{den1} satisfies the \emph{large deviation principle} (LDP) with the rate function $D_t^2(x)$.

Let us briefly recall the LDP. Let $(X_t)_{t\geq 0}$ be a L\'evy process associated with transition function $\mu_t(dx)$. Moreover, we assume that $X_t$ has exponential moments, i.e.\
\begin{gather}\tag{\textbf{A2}}\label{A2}
    \int_{\rn}  e^{y\cdot \lambda}\mu_t(dy)<\infty
    \quad\text{for all $\lambda\in\rn$ and $t>0$}.
\end{gather}
By \cite[Theorem 23.5]{Sa}, this is equivalent to our assumption \eqref{A1}. Therefore, we
can extend $\psi$ analytically from $\rn$ to $\comp^n$.

Let
\begin{equation*}
    \Lambda^*_\mu(x,t):=\sup_{\xi}\{  \xi\cdot x -\Lambda_\mu (\xi,t)\},
\end{equation*}
where
\begin{equation*}
    \Lambda_\mu(\xi,t):=\log \int_{\rn} e^{\xi\cdot y} \mu_t(dy)=t\psi(i\xi).
\end{equation*}
By $\mu_t^{(\ell)}(dx)$ we denote  the probability measure related to  $Y_t^{(\ell)}:=\frac{1}{\ell} \sum_{j=1}^\ell X_{t}^{j}$, where $X_t^j$ are independent copies of $X_t$. It is known, see e.g.\ \cite[Chapter 3]{FK}, that under \eqref{A1} the sequence of measures $(\mu_t^{(\ell)}(dx))_{\ell\geq 1}$ is exponentially tight and, by Cramer's theorem, it satisfies the LDP with \emph{good rate function} $\Lambda^*_\mu(x,t)$, i.e.\ for all measurable subsets $B\subset\rn$ the inequalities
\begin{equation*}
    -\inf_{x\in B^\circ} \Lambda^*_\mu(x,t)
    \leq \varliminf_{\ell\to\infty} \frac{1}{\ell} \log \mu_t^{(\ell)}(B)
    \leq \varlimsup_{\ell\to\infty} \frac{1}{\ell} \log \mu_t^{(\ell)}(B)
    \leq - \inf_{x\in \overline{B}} \Lambda^*_\mu(x,t).
\end{equation*}
hold; $B^\circ$ and $\overline{B}$ are the open interior and the closure of $B$, respectively. Clearly,  $\Lambda^*_\mu(x,t)= -v_t(\xi_0,x)=D_t^2(x)$. By Theorem~\ref{pr} we have the analogue of the LDP for the transition density $p_t(x)$ from Theorem~\ref{pr}.

\begin{proposition}\label{prop9}
    Let $(X_t)_{t\geq 0}$ be a L\'evy process with symbol $\psi(\xi)$ satisfying
\begin{equation}\label{loginc}
    \lim_{|\xi|\to\infty} \frac{\psi(\xi)}{\log(1+|\xi|)} > C.
\end{equation}
    Assume that \eqref{A1}, or equivalently \eqref{A2}, holds. Then $X_t$ has for all $t>t_0:= n/C$ a transition density $p_t(x)$ and for all $t>t_0$
\begin{equation}
    \lim_{\ell\to \infty} \frac{\log p_{\ell t}(\ell x)}{\ell}=-D_t^2(x).
\end{equation}
\end{proposition}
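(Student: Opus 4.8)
The plan is to establish matching upper and lower bounds for $\frac{1}{\ell}\log p_{\ell t}(\ell x)$ and check that the hypothesis \eqref{loginc} guarantees the integrability needed for the transition density to exist. First I would verify the existence of $p_t(x)$ for $t > t_0 = n/C$: condition \eqref{loginc} says $\psi(\xi) \geq C'\log(1+|\xi|)$ for $|\xi|$ large with any $C' < C$ close to $C$, so $e^{-t\psi(\xi)} \leq (1+|\xi|)^{-tC'}$, which is integrable over $\rn$ precisely when $tC' > n$, i.e.\ for $t > n/C$ after optimizing over $C'$; hence the Fourier inversion formula \eqref{den1} defines $p_t(x)$ for such $t$. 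Note also that \eqref{A1} (equivalently \eqref{A2}) is in force, so $\psi$ extends analytically to $\comp^n$ and $D_t^2(x)$ is well defined; the scaling $\ell t > t_0$ for $t > t_0$ and $\ell \geq 1$ ensures $p_{\ell t}(\ell x)$ exists along the sequence.

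For the upper bound I would apply Theorem \ref{pr} with time $\ell t$ and space point $\ell x$: this gives $p_{\ell t}(\ell x) \leq e^{-D_{\ell t}^2(\ell x)} p_{\ell t}(0)$. The key algebraic observation is the homogeneity of $D_t^2$ under the joint scaling $(t,x)\mapsto(\ell t,\ell x)$: from the definition \eqref{D}, $D_{\ell t}^2(\ell x) = -\min_\xi\big(-\xi\cdot(\ell x) - \ell t\, w(\xi)\big) = \ell\big(-\min_\xi(-\xi\cdot x - t w(\xi))\big) = \ell\, D_t^2(x)$. Therefore $\frac{1}{\ell}\log p_{\ell t}(\ell x) \leq -D_t^2(x) + \frac{1}{\ell}\log p_{\ell t}(0)$, and it remains to show $\frac{1}{\ell}\log p_{\ell t}(0) \to 0$. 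Since $p_{\ell t}(0) = \int_\rn e^{-\ell t\psi(\xi)}\,d\xi$, the hypothesis \eqref{loginc} gives, for $\ell$ large, $p_{\ell t}(0) \leq \int_\rn (1+|\xi|)^{-\ell t C'}\,d\xi$, which decays polynomially in $\ell$; hence $\frac{1}{\ell}\log p_{\ell t}(0)\to 0$. (One should also note $p_{\ell t}(0)\geq p_{\ell t}(\ell x)\geq 0$, but a cleaner lower bound on $p_{\ell t}(0)$ is not needed here.)

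For the lower bound I would use the large deviation principle for the empirical-mean measures $\mu_t^{(\ell)}$ recalled just before the proposition: by Cramér's theorem the family $(\mu_t^{(\ell)})_\ell$ satisfies the LDP with good rate function $\Lambda^*_\mu(x,t) = D_t^2(x)$. The link to $p_{\ell t}(\ell x)$ comes from the identity in law $Y_t^{(\ell)} = \frac{1}{\ell}\sum_{j=1}^\ell X_t^j \stackrel{d}{=} \frac{1}{\ell}X_{\ell t}$ (stationary independent increments), so $\mu_t^{(\ell)}$ is the law of $\ell^{-1}X_{\ell t}$ and its density at $x$ is $\ell^n p_{\ell t}(\ell x)$; thus $\frac{1}{\ell}\log\mu_t^{(\ell)}(\text{density at }x) = \frac{1}{\ell}\log p_{\ell t}(\ell x) + \frac{n\log\ell}{\ell}$ and the $\frac{n\log\ell}{\ell}$ term is negligible. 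Combining the LDP lower bound applied to small balls $B(x,\delta)$ with the already-established upper bound (which, being uniform, controls $\mu_t^{(\ell)}(B(x,\delta))$ from above via $\sup_{y\in B(x,\delta)} p_{\ell t}(\ell y)$ together with continuity of $D_t^2$), and letting $\delta\downarrow 0$ using that $D_t^2$ is continuous, yields $\liminf_\ell \frac{1}{\ell}\log p_{\ell t}(\ell x) \geq -D_t^2(x)$. The main obstacle is this last step: the LDP controls the \emph{measure} of sets, not the density pointwise, so one must pass from $\mu_t^{(\ell)}(B(x,\delta))$ to the density $\ell^n p_{\ell t}(\ell x)$; the upper bound from Theorem \ref{pr} plus the homogeneity and continuity of $D_t^2(\cdot)$ provide exactly the equicontinuity-type control needed to sandwich the density between the measures of shrinking balls, so the two bounds meet at $-D_t^2(x)$.
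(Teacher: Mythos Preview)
Your upper-bound argument is correct and is essentially what the paper does: Theorem~\ref{pr}, the scaling identity $D_{\ell t}^2(\ell x)=\ell\,D_t^2(x)$, and the Hartman--Wintner estimate $\frac{1}{\ell}\log p_{\ell t}(0)\to 0$ together give $\limsup_\ell \frac{1}{\ell}\log p_{\ell t}(\ell x)\le -D_t^2(x)$.

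The lower bound is where your plan diverges from the paper and runs into a real gap. The paper does \emph{not} go through Cram\'er's theorem for the measures $\mu_t^{(\ell)}$ and then try to recover density information. Instead it revisits the proof of Theorem~\ref{pr} and uses the \emph{equality} coming from the contour shift, not just the inequality: one has
\[
    p_t(x)\;=\;e^{v_t(\xi_0,x)}\int_{\rn} e^{A_{t,x,\xi_0}(\xi)}\,d\xi,
    \qquad
    A_{t,x,\xi_0}(\xi):=Q_t(\xi+i\xi_0,x)-v_t(\xi_0,x),
\]
so that $\frac{1}{\ell}\log p_{\ell t}(\ell x) = -D_t^2(x) + \frac{1}{\ell}\log\int_{\rn} e^{A_{\ell t,\ell x,\xi_0}(\xi)}\,d\xi$ exactly. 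Both directions of the limit then come from the single task of showing that the second term tends to $0$; the bound $\big|\int e^A\big|\le p_{\ell t}(0)$ together with $\frac{1}{\ell}\log p_{\ell t}(0)\to 0$ handles the $\limsup$, and the identity itself (which says that $\int e^A$ is the density of the Esscher-tilted process at its centre) is what gives access to the $\liminf$.

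Your proposed route for the lower bound, by contrast, cannot be closed with the ingredients you list. The LDP lower bound controls $\mu_t^{(\ell)}(B(x,\delta))$ from below; combining this with the upper bound $p_{\ell t}(\ell y)\le p_{\ell t}(0)\,e^{-\ell D_t^2(y)}$ only yields
\[
    e^{-\ell(\inf_{B(x,\delta)}D_t^2+\varepsilon)}
    \;\le\;\mu_t^{(\ell)}(B(x,\delta))
    \;\le\; \ell^n\,|B(x,\delta)|\,p_{\ell t}(0)\,e^{-\ell\inf_{B(x,\delta)}D_t^2},
\]
which is a statement about $p_{\ell t}(0)$ and says nothing about $p_{\ell t}(\ell x)$. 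The ``equicontinuity-type control'' you invoke would require a two-sided comparison of $p_{\ell t}(\ell y)$ with $p_{\ell t}(\ell x)$ for $y$ near $x$, but Theorem~\ref{pr} is a one-sided inequality and gives no such ratio bound. In short, a measure-level LDP lower bound plus a pointwise density upper bound do not sandwich the density at $x$. The fix is exactly the paper's move: go back inside the proof of Theorem~\ref{pr} and keep the contour-shift identity as an equality rather than discarding information to get~\eqref{up1}.
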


\begin{proof}
Condition \eqref{loginc} is the Hartman-Wintner condition which ensures that $X_t$ has a (continuous) transition density for all $t>t_0 = n/C$, see \cite{HW}. Clearly, we may follow the arguments of the proof of Theorem \ref{pr} whenever $p_t(x)$ exists, i.e.\ for $t>t_0$.

As in the proof of Theorem~\ref{pr}, we can write the transition density $p_t(x)$ as
$$
    p_t(x)=e^{v_t(x,\xi_0)} \int_{\rn} e^{A_{t,x,\xi_0}(\xi)} \, d\xi,
$$
where $\xi_0$, $v_t(x,\xi_0)$ are as before, and $A_{t,x,\xi_0}:=Q_t(\xi+i\xi_0,x)-v_t(\xi_0,x)$. Note that
\begin{equation}\label{xi00}
    \left|\int_{\rn} e^{A_{t,x,\xi_0}(\xi)}\, d\xi\right|
    \leq\left|\int_{\rn} e^{A_{t,0,\xi_0}(\xi)}\, d\xi\right|
    \leq  p_t(0).
\end{equation}
By \eqref{loginc} we obtain for $k\geq t_0$
\begin{align*}
    \frac{\log p_k(0)}{k}
    \leq \log\left( \int_{\rn} e^{-kC'\log(1+|\xi|)}d\xi \right)^{1/k}
    &= \log \left(\int_{\rn} \frac{d\xi}{(1+|\xi|)^{kC'}}\right)^{1/k}\\
    &\asymp \frac 1k \log \frac{1}{kC'}\xrightarrow{k\to\infty} 0.
\end{align*}
Because of \eqref{xi00}
\begin{equation*}
    \lim_{\ell\to\infty} \frac{\log \int_{\rn} e^{A_{\ell t,x,\xi_0}(\xi)}\, d\xi }{\ell}=0,
\end{equation*}
and since $v_{\ell t}(\xi,\ell x)=-\ell\xi\cdot x-\ell t w(\xi)$, it is clear that $\xi_0=\arg \min_\xi v_t(\xi,x)$ does not depend on $\ell$. Hence,
\begin{equation*}
    -D_{\ell t}^2 (x)=v_{\ell t}(\xi_0,\ell x)=\ell v_t(\xi_0,x)=-\ell D_t^2(x).
\end{equation*}
Combining the last two formulae we get
\begin{gather*}
    \lim_{\ell\to \infty} \frac{\log p_{\ell t}(\ell x)}{\ell}
    = \lim_{\ell\to\infty}\frac{\log \int_{\rn} e^{A_{\ell t,x,\xi_0}(\xi)}\, d\xi}{\ell} + \lim_{\ell\to\infty} \frac{\log e^{-\ell D_t^2(x)}}{\ell}
 =-D_t^2(x).
\qedhere
\end{gather*}
\end{proof}

\section{Estimates for L\'evy-type processes}\label{sec4}

In this section we generalize the results obtained in  Section~\ref{sec2} to the case of pseudo-differential operators with continuous negative definite symbol. We show that under some conditions one can construct an upper bound in the form similar to \eqref{up1} for the transition density of a Markov process related to a pseudo-differential operator. Typical examples are Feller processes such that the test functions $C_0^\infty$ are in the domain $D(A)$ of their generator, cf.\ \cite{J01,JSS}.

Let $u\in C_0^\infty(\rn)$. Consider the operator
\begin{equation}\label{qD}
    q(x,D) u(x)=\int_{\rn} e^{i\xi\cdot x} q(x,\xi)\,\widehat{u}(\xi)\, d\xi,
\end{equation}
where $q:\rn\times \rn \to\real$ is locally bounded and for each $x\in\rn$ the function $q(x,\cdot)$ is continuous negative definite. This means that  $q(x,\xi)$ admits a L\'evy-Khintchine representation:
\begin{equation}\label{lhr}
    q(x,\xi)
    = \frac 12\,\xi\cdot Q(x)\xi + \int_{\rn\setminus\{0\}} \left(1-\cos(y\cdot\xi)\right)\, N(x,dy),\quad x\in\rn,
\end{equation}
where  $Q(x)=(Q_{jk}(x))\in\real^{n\times n}$ is a positive semi-definite matrix, and $N(x,dy)$ is a L\'evy kernel, i.e.\ for fixed $x\in\rn$ it is a Borel measure on $\rn\setminus \{0\}$, such that
$$
    \int_{\rn\setminus \{0\}} (|y|^2\wedge 1 )\,N(x,dy)<\infty.
$$
Such an operator $q(x,D)$  is called a pseudo-differential operator with real-valued continuous negative definite symbol. Using Fourier inversion the following integro-differential representation for $u\in C_0^\infty(\rn)$ is easily derived:
\begin{equation}\label{qD-integro}
    -q(x,D) u(x)= \frac 12\,\sum_{j,k=1}^n Q_{jk}(x)\partial_j\partial_k u(x) + \int_{\rn\setminus\{0\}}\left(u(x+y)-u(x)\right)N(x,dy).
\end{equation}

In the sequel we will need a few further assumptions.
\begin{gather}\tag{\bfseries B1}\label{B1}
    (-q(x,D),C_0^\infty(\rn))
    \text{\ extends to the generator $(A,D(A))$ of a Feller semigroup}.
\end{gather}

\begin{remark}
    For sufficient conditions when \eqref{B1} is satisfied we refer to \cite[Theorem~5.2]{J94}, \cite[Theorem 4.14]{Hhab}, \cite[Section 2.6]{J02} or the survey paper \cite{JSS} and the references given there.
\end{remark}

Denote by $(T_t)_{t\geq 0}$ the Feller semigroup and by $(X_t)_{t\geq 0}$ the Feller process generated by (the extension of) $-q(x,D)$. We set
\begin{equation} \label{lam}
    \lambda_t(x,\xi):=e^{-i\xi\cdot x}T_t (e^{i\xi \bullet})(x).
\end{equation}
Since $q(x,\xi)$ is real-valued, $\lambda_t(x,\xi)=\lambda_t(x,-\xi)$ for all $x,\xi\in\rn$ and $t>0$. Writing $p_t(x,dy)$ for the transition function of the process, it is easy to see that each $T_t$ is a pseudo-differential operator with symbol $\lambda_t(x,\xi)$:
\begin{equation}\label{Tlam}
    T_t u(x)
    =\int_{\rn} u(y)p_t(x,dy)
    =\int_{\rn} e^{i\xi\cdot x}\lambda_t(x,\xi)\,\widehat{u}(\xi)\,d\xi,
    \quad u\in C_0^\infty(\rn).
\end{equation}
From \eqref{Tlam} we see that if $\lambda_t(x,\cdot)\in L_1(\rn)$, the probability measures $p_t(x,dy)$ have densities $p_t(x,y)$ w.r.t.\ Lebesgue measure $dy$ and we see, cf.\ \cite[Theorem 3.2.1]{J01}, that
\begin{align*}
    \|T_t u\|_{L_\infty}
    \leq \|\lambda_t(x,\cdot)\|_{L_1} \cdot \|\widehat{u}\|_{L_\infty}
    \leq \|\lambda_t(x,\cdot)\|_{L_1}\cdot \|u\|_{L_1}.
\end{align*}
Thus,
\begin{equation}
    p_t(x,y)\leq  \|\lambda_t(x,\cdot)\|_{L_1},
    \quad\text{for all}\quad x,y\in\rn,\;t>0,
\end{equation}
and we can express $p_t(x,y)$ in terms of the symbol $\lambda_t(x,\xi)$:
\begin{equation}\label{plam}
    p_t(x,y)=\int_{\rn} e^{i \xi\cdot(x-y)} \lambda_t(x,\xi)\,d\xi.
\end{equation}

In addition to \eqref{B1} we will also need
\begin{gather}
    \lambda_t(x,\cdot)\in L_1(\rn)
    \quad\text{for all $x\in\rn$, $t>0$}.
    \tag{\bfseries B2}\label{B2}\\
    \sup_{z\in\rn} \int_{0<|y|\leq 1} |y|^2 N(z,dy) + \sup_{z\in\rn} \int_{|y|>1} e^{\zeta\cdot y} N(z,dy) < \infty
    \quad\text{for all $\zeta\in\rn$}.
    \tag{\bfseries B3}\label{B3}\\
    \left| \frac{\lambda_t(x,\eta+i\xi)}{\lambda_t(x,i\xi)}\right| \leq |\lambda_t(x,\eta)|
    \quad\text{for all $\xi,\eta\in\rn$, $t>0$}.
    \tag{\bfseries B4}\label{B4}
\end{gather}

Note that \eqref{B3} entails that the process $(X_t)_{t\geq 0}$ has exponential moments.
\begin{lemma}\label{lemma-exp}
    Let $-q(x,D)$ be a pseudo-differential operator satisfying \eqref{B1} and \eqref{B3}. Then the Feller process $(X_t)_{t\geq 0}$ generated by (the extension of) $-q(x,D)$ admits exponential moments:
    \begin{equation}\label{B33}
        \Ee^x e^{\zeta\cdot X_t} = \int_\rn e^{\zeta\cdot y}\,p_t(x,y)\,dy < \infty
    \end{equation}
    for all $\zeta\in\rn$ and $x\in\rn$.
\end{lemma}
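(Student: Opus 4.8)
The plan is to exploit that, by \eqref{B1}, $C_0^\infty(\rn)\subset D(A)$ with $A|_{C_0^\infty}=-q(\cdot,D)$, and to run a Dynkin--Gronwall argument on compactly supported truncations of the exponential $e_\zeta(x):=e^{\zeta\cdot x}$ ($\zeta\in\rn$ fixed). First I would record the relevant ``eigenfunction'' identity: applying $-q(x,D)$ formally to $e_\zeta$ via the integro-differential representation \eqref{qD-integro}, and using that $q(x,\cdot)$ is real and even (so that $N(x,dy)$ may be symmetrised, which makes the linear term in the expansion $e^{\zeta\cdot y}=1+\zeta\cdot y+\dots$ drop out), one obtains
\[
    -q(x,D)e_\zeta(x)=e^{\zeta\cdot x}H_\zeta(x),\qquad
    H_\zeta(x):=\tfrac12\zeta\cdot Q(x)\zeta+\int_{\rn\setminus\{0\}}\bigl(\cosh(\zeta\cdot y)-1\bigr)N(x,dy)\ge0 .
\]
The key quantitative point is that $c_\zeta:=\sup_{x\in\rn}H_\zeta(x)<\infty$: for $|y|\le1$ one uses $\cosh(\zeta\cdot y)-1\le C_\zeta|y|^2$ together with $\sup_x\int_{0<|y|\le1}|y|^2N(x,dy)<\infty$; for $|y|>1$ one uses $\cosh(\zeta\cdot y)-1\le\tfrac12(e^{\zeta\cdot y}+e^{-\zeta\cdot y})$ and \eqref{B3} (with $\zeta$ and $-\zeta$); the diffusion term is bounded under the standing assumptions on the symbol $q$ (and is absent if $Q\equiv0$).

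Next, fix $x\in\rn$, $t>0$, $R>|x|$, pick $\chi_R\in C_0^\infty(\rn)$ with $\I_{B(0,R)}\le\chi_R\le\I_{B(0,2R)}$, and put $h_R:=e_\zeta\chi_R\in C_0^\infty(\rn)\subset D(A)$. The estimate on which everything rests is
\[
    -q(z,D)h_R(z)\le c_\zeta\,h_R(z)\qquad\text{whenever }|z|<R .
\]
For such $z$ one has $\chi_R\equiv1$ near $z$, so the second-order part of \eqref{qD-integro} contributes exactly $\tfrac12(\zeta\cdot Q(z)\zeta)h_R(z)$; in the jump part the difference $\tfrac12(h_R(z+y)+h_R(z-y))-h_R(z)$ equals $e^{\zeta\cdot z}(\cosh(\zeta\cdot y)-1)$ plus a correction built from the factors $\chi_R(z\pm y)-1\le0$, hence it is $\le e^{\zeta\cdot z}(\cosh(\zeta\cdot y)-1)$, and integrating against $N(z,dy)$ and adding the two parts gives $-q(z,D)h_R(z)\le H_\zeta(z)h_R(z)\le c_\zeta h_R(z)$.

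Now set $\tau_R:=\inf\{s>0:|X_s|\ge R\}$ and apply Dynkin's formula to $h_R\in D(A)$ and the bounded stopping time $t\wedge\tau_R$ (here $Ah_R=-q(\cdot,D)h_R\in C_\infty(\rn)$ is bounded, so optional stopping is legitimate). Since $|X_s|<R$ for $s<\tau_R$, so that $h_R(X_s)=e^{\zeta\cdot X_s}$ there, the function $F_R(s):=\Ee^x[e^{\zeta\cdot X_s}\I_{\{s<\tau_R\}}]$, which is finite ($\le e^{2|\zeta|R}$), satisfies
\[
    F_R(t)\le\Ee^x\bigl[h_R(X_{t\wedge\tau_R})\bigr]=h_R(x)+\Ee^x\!\int_0^{t\wedge\tau_R}\!(-q(\cdot,D)h_R)(X_s)\,ds\le e^{\zeta\cdot x}+c_\zeta\!\int_0^t F_R(s)\,ds ,
\]
using the displayed estimate and Tonelli's theorem. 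Gronwall's inequality then gives $F_R(t)\le e^{\zeta\cdot x}e^{c_\zeta t}$, with a bound independent of $R$. Finally, since $q(x,0)\equiv0$ the Feller semigroup generated by $-q(x,D)$ is conservative (otherwise one adjoins a cemetery point and sets $e^{\zeta\cdot\partial}:=0$, which only decreases the left-hand side), hence $\tau_R\uparrow\infty$ almost surely and $e^{\zeta\cdot X_t}\I_{\{t<\tau_R\}}\uparrow e^{\zeta\cdot X_t}$; monotone convergence yields $\Ee^x e^{\zeta\cdot X_t}\le e^{\zeta\cdot x}e^{c_\zeta t}<\infty$, and invoking \eqref{B2} to write $p_t(x,dy)=p_t(x,y)\,dy$ gives \eqref{B33}.

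I expect the main obstacle to be precisely the two displayed estimates. Producing a globally finite ``eigenvalue'' $c_\zeta$ hinges on the cancellation of the drift (equivalently, on the symmetry of $q(x,\cdot)$, so that the integrable-near-zero function $\cosh(\zeta\cdot y)-1$ replaces the bare exponential, making \eqref{B3} sufficient), and the truncation bound requires choosing $\chi_R$ so that the cut-off corrections in the jump term carry the favourable sign on $B(0,R)$; the localisation via $\tau_R$, the Gronwall step, and the passage to the limit are then routine.
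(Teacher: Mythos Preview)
Your proof is correct and follows the same overall strategy as the paper's: apply Dynkin's formula to a compactly supported truncation of the exponential test function, localize via the first exit time from a large ball, obtain a bound uniform in the truncation parameter, and let the radius tend to infinity using conservativeness (which the paper also deduces from \eqref{B3}).

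The execution differs in two respects. First, the paper works with $\cosh\big(\zeta\cdot(X_t-x)\big)$ and keeps two separate cut-off scales (a spatial cut-off $\chi_\ell$ and a stopping radius $k$), whereas you symmetrise $N(x,\cdot)$ at the outset---legitimate, since the symbol \eqref{lhr} depends only on the even part of $N$ and \eqref{B3} is preserved under symmetrisation---and then work directly with $e_\zeta$ and a single radius $R$. Second, and more substantively, the paper derives an inequality of the form $\sup_{s\le t}\Ee^x[\cdots]\le 1+Ct\sup_{s\le t}\Ee^x[\cdots]$, which forces a restriction to small $t<t_0(\zeta)$ and a subsequent extension via the Markov property, whereas your Gronwall argument yields $\Ee^x e^{\zeta\cdot X_t}\le e^{\zeta\cdot x}e^{c_\zeta t}$ for all $t>0$ in one stroke. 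Your route is a little cleaner; the paper's route has the minor advantage that the estimate $|\cosh(a+b)-\cosh a|\le\cosh a\,(\cosh b-1)$ produces the integrable kernel $\cosh(\zeta\cdot y)-1$ without any appeal to the symmetry of $N$. One small remark: $q(x,0)=0$ alone does not imply conservativeness; it is \eqref{B3} (bounded coefficients) that gives $\tau_R\uparrow\infty$, as the paper also notes.
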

\begin{proof}
    Write $(A,D(A))$ for the generator of the process and its semigroup $(T_t)_{t\geq 0}$. Since $(X_t)_{t\geq 0}$ is a Feller process we know that for all $u\in D(A)$ the process $M_t^u := u(X_t)-u(X_0)-\int_0^t Au(X_s)\,ds$ is a martingale. In particular,
    \begin{equation}\label{Tq}
        T_t u(x)-u(x)=-\int_0^t T_s q(x,D) u(x)\, ds,
    \end{equation}
    for all $u\in C_0^\infty(\rn)$.

    Let us show that \eqref{Tq} still holds for $u(x) := u_{z,\zeta}(x):=\cosh \big(\zeta\cdot(x-z)\big)$, $\zeta\in\rn$. For this pick a sequence of test functions $\chi_\ell\in C_0^\infty(\rn)$, $\ell\in\nat$, with $\I_{B(0,\ell)}\leq\chi_\ell\leq\I_{B(0,2\ell)}$; observe that $x\mapsto \chi_\ell(x-z) \cosh \big(\zeta\cdot(x-z)\big)$ is in $D(A)$. Therefore we can rewrite \eqref{Tq} in the following way
\begin{multline*}
    \Ee^x\Big[\chi_\ell(X_t-z)\cosh\big(\zeta\cdot(X_t-z)\big)\Big]\bigg|_{z=x}-1\\
    = \Ee^x \int_0^t \int\limits_{\rn\setminus\{0\}} \Big[\chi_\ell(X_s-x+y)\cosh\big(\zeta\cdot(X_s-x+y)\big) - \chi_\ell(X_s-x)\cosh\big(\zeta\cdot(X_s-x)\big)\Big]\\
    \hfill\times N(X_s,dy)\,ds
\end{multline*}
Since we are integrating with respect to Lebesgue measure $ds$ and since $s\mapsto X_s$ has almost surely at most countably many jumps, we may replace $X_s$ in the above formula by its left limit $X_{s-}$. Moreover, set
$$
    \tau_k := \inf\left\{s\geq 0\::\: |X_s - x|\geq k\right\}.
$$
Using optional stopping for the martingale $M_t^u$ and writing $X^{\tau_k}_s := X_{\tau_k\wedge s}$ for the stopped process, we get from the identity above
\begin{align}
    &\Ee^x\Big[\chi_\ell(X_t^{\tau_k}-x)\cosh\big(\zeta\cdot(X_t^{\tau_k}-x)\big)\Big]-1\notag\\
    &= \left|\Ee^x \int_0^{t\wedge\tau_k} \int_{\rn\setminus\{0\}} \left(\cdots \right)N(X_{s-},dy)\,ds\right|\notag\\
    &\leq \Ee^x \int_0^{t\wedge\tau_k} \int_{\rn\setminus\{0\}} \left|\left(\cdots \right)\right|N(X_{s-},dy)\,ds\notag\\
    &\leq \Ee^x \int_0^{t\wedge\tau_k} \!\!\! \int\limits_{0<|y|\leq 1} \left|\left(\cdots \right)\right| N(X_{s-},dy)\,ds
    + \Ee^x \int_0^{t\wedge\tau_k} \!\!\!\int\limits_{|y|>1} \left|\left(\cdots \right)\right|N(X_{s-},dy)\,ds.
    \label{e-terms}
\end{align}
Let us estimate the last two integrals separately. For $\ell\geq k+1$, $|y|\leq 1$ and $s\leq\tau_k\wedge t$ we have $\chi_\ell(X_{s-}-x-y)=\chi_\ell(X_{s-}-x)=1$. Thus,
\begin{align*}
    &\Ee^x \int_0^{t\wedge\tau_k} \int_{0<|y|\leq 1} \left|\left(\cdots \right)\right|N(X_{s-},dy)\,ds\\
    &= \Ee^x \int_0^{t\wedge\tau_k} \int_{0<|y|\leq 1} \left|\cosh\big(\zeta\cdot(X_{s-}-x+y)\big) - \cosh\big(\zeta\cdot(X_{s-}-x)\big)\right|N(X_{s-},dy)\,ds\\
    &\leq \Ee^x \int_0^{t\wedge\tau_k} \int_{0<|y|\leq 1} \cosh\big(\zeta\cdot(X_{s-}-x)\big)\cdot \big(\cosh(\zeta\cdot y) - 1\big)\,N(X_{s-},dy)\,ds\\
    &\leq \frac 12|\,\zeta|^2 \,e^{|\zeta|} \,\Ee^x \int_0^{t\wedge\tau_k} \int_{0<|y|\leq 1} \cosh\big(\zeta\cdot(X_{s-}-x)\big) \, |y|^2\,N(X_{s-},dy)\,ds\\
    &\leq \frac 12\,|\zeta|^2 \, e^{|\zeta|} \,t\,\sup_{s<t} \Ee^x \big(\cosh\big(\zeta\cdot(X^{\tau_k}_{s-}-x)\big)\big) \, \sup_{z\in\rn} \int_{0<|y|\leq 1} |y|^2\,N(z,dy)\,ds
\end{align*}
where we used the elementary inequalities $|\cosh(a+b)-\cosh(a)|\leq \cosh(a) (\cosh(b)-1)$ and $\cosh(b)-1 \leq \frac 12\,b^2\,e^{|b|}$.

A similar calculation using the fact that $|\chi_\ell|\leq 1$ and $\cosh(a+b)\leq e^{|a|}\cosh(b)$ yields for the second integral term in \eqref{e-terms}
\begin{align*}
    &\Ee^x \int_0^{t\wedge\tau_k} \int_{|y|>1} \left|\left(\cdots \right)\right| N(X_{s-},dy)\,ds\\
    &\leq \Ee^x \int_0^{t\wedge\tau_k} \int_{|y|> 1} \Big(\left|\cosh\big(\zeta\cdot(X_{s-}-x+y)\big)\right| + \left|\cosh\big(\zeta\cdot(X_{s-}-x)\big)\right| \Big) N(X_{s-},dy)\,ds\\
    &\leq t\,\sup_{s<t} \Ee^x \big(\cosh\big(\zeta\cdot(X^{\tau_k}_{s-}-x)\big)\big)\,\sup_{z\in\rn}\int_{|y|>1} \left(e^{|\zeta\cdot y|}+1\right)N(z,dy).
\end{align*}
Because of \eqref{B3} we find a constant $C=C_\zeta$ not depending on $k$ or $x$ such that for all $t>0$
\begin{align*}
    \Ee^x \left[\cosh\big(\zeta\cdot(X_t^{\tau_k}-x)\big)\right]
    &= \sup_{\ell\in\nat} \Ee^x \left[\chi_\ell(X_t^{\tau_k}-x) \cosh\big(\zeta\cdot(X_t^{\tau_k}-x)\big)\right]\\
    &\leq 1 + t\,C\, \sup_{s\leq t} \Ee^x\left[\cosh\big(\zeta\cdot(X_{\tau_k\wedge s-}-x)\big)\right]\\
    &\leq 1 + t\,C\, \sup_{s\leq t} \left(\gamma_k \wedge\Ee^x \left[ \cosh\big(\zeta\cdot(X_{s}^{\tau_k}-x)\big)\right]\right).
\end{align*}
where $\gamma_k > \sup_{|y|\leq k}\cosh\big(\zeta\cdot(y-x)\big)$. Since $t>0$ was arbitrary and since the right-hand side depends monotonically on $t$, the above calculation also gives
$$
    \sup_{s\leq t}\Ee^x \cosh\big(\zeta\cdot(X_s^{\tau_k}-x)\big)
    \leq 1 + t\,C\, \sup_{s\leq t} \left(\gamma_k\wedge\Ee^x \cosh\big(\zeta\cdot(X_{s}^{\tau_k}-x)\big)\right).
$$
Estimating the left-hand side trivially from below, and choosing $t<t_0 < 1/(2C)$, we find
$$
    \sup_{s\leq t}\Big(\gamma_k\wedge \Ee^x \cosh\big(\zeta\cdot(X_{s}^{\tau_k}-x)\big)\Big)
    -\frac 12 \,\sup_{s\leq t}\Big(\gamma_k\wedge \Ee^x \cosh\big(\zeta\cdot(X_{s}^{\tau_k}-x)\big)\Big)
    \leq 1.
$$
Note that \eqref{B3} entails that the generator $-q(x,D)$ has bounded `coefficients', i.e.\ the life-time of $X_t$ is a.s.\ infinite and $\lim_{k\to\infty} \tau_k = \infty$, see \cite{sch-positivity}. By Fatou's Lemma we get
$$
    \sup_{s\leq t}\Ee^x \cosh\big(\zeta\cdot(X_{s}-x)\big)
    \leq 2\quad\text{for all}\quad x\in\rn,\; t\in (0,t_0].
$$
Note that $t_0=t_0(\zeta)$. Now it is a simple exercise using the Markov property to show that
\begin{gather*}
    \Ee^x\cosh\big(\zeta\cdot(X_t-x)\big) < \infty\quad\text{for all}\quad t>0.
\qedhere
\end{gather*}
\end{proof}

Here are some examples of Markov processes for which the conditions \eqref{B1}--\eqref{B4} are satisfied.
\begin{example}\label{je1}
    Let
    $$
        \mathcal{L}
        :=
        \sum_{j,k=1}^n \frac{\partial}{\partial x_j}\left(a_{jk}(x)\frac{\partial}{\partial x_k}\right),
    $$
    where $(a_{jk}(x))_{j,k= 1}^n$ is a symmetric  positive definite matrix with bounded measurable coefficients. It is known that there exists the transition density $p_t^{(0)}(x,y)$ of the diffusion semigroup  associated with $\mathcal{L}$, which  satisfies  Aronson's estimates, see \cite{Ar} and \cite{Da}, and which is jointly H\"older continuous  in $x$ and $y$. Let $\psi^{(0)}$ be a continuous negative definite function satisfying the conditions of Theorem~\ref{pr}. Then the operator $\mathcal{L}+\psi^{(0)}(D)$ satisfies \eqref{B1}--\eqref{B4}. Indeed, in this case the symbol $\lambda_t(x,\xi)$ associated with $\mathcal{L}+\psi^{(0)}(D)$ is the product of two symbols, associated with $\mathcal{L}$ and with $\psi^{(0)}(D)$, both satisfying \eqref{B1}--\eqref{B4}.
\end{example}

\begin{example}
    Consider a Markov process in  $\real^m_+\times \rn$, such that for every $t>0$ the characteristic function $\lambda_t(x,\xi)$ has exponential affine dependence on $x$. That is, for every $(t,\xi)\in \real_+\times i\real^{n+m}$ there exist $\Phi(t,\xi)\in \comp$, $\Psi(t,\xi)=(\Psi^Y(t,\xi),\Psi^Z(t,\xi)) \in \comp^m\times \comp^n$, such that for all $x\in\real^m_+\times \rn$
    \begin{equation}
        \lambda_t(x,\xi)= e^{\Phi(t,\xi)+(\Psi(t,\xi),x)}.
    \end{equation}
    A Markov process $X_t$ with such a characteristic function is  called an \emph{affine process}. Such processes have been recently considered in mathematical finance, cf.\ \cite{DFS}.

    If an affine process is regular---i.e.\ $q(x,\xi):=\partial_t \lambda_t(x,\xi)\big|_{t=0}$ exists for all $x$ and all $\xi\in \{z=(z_1,\ldots, z_m)\in \comp^m \::\: \Im z_j\geq 0, j=1,\ldots ,m\}\times \rn$ and is continuous at $\xi=0$---, then $X_t$ is a Feller process, see \cite[\S 8]{DFS}, hence \eqref{B1} is satisfied. Since for affine processes we explicitly know the representation of the characteristic function, it is easy to find conditions in terms of $\Phi$ and $\Psi$ such that \eqref{B2}--\eqref{B4} hold. For example, \eqref{B2}--\eqref{B4} are satisfied for $\Phi=\Psi^Y=0$, $\Psi^Z(\xi)=(\psi_{t,1}(\xi), \ldots,\psi_{t,n}(\xi))$ where $\psi_{t,j}(\xi)$, $j=1\ldots n$, are continuous negative definite functions satisfying the conditions of Theorem~\ref{pr} for all values of the parameter $t$.
\end{example}

Note that \eqref{B33} is equivalent to the finiteness of the following integral:
\begin{equation}\label{om}
    w_t(x,\zeta):=\ln \Big[e^{x\cdot\zeta} \int_{\rn} e^{-\zeta\cdot y} p_t(x,y)\, dy \Big]=\ln \lambda_t(x,i\zeta) \quad \text{for all\ \ } \zeta\in\rn,
\end{equation}
which is  a convex function of $\zeta$. Indeed, let $0<\alpha<1$, $\zeta,\xi\in\rn$. Then
\begin{align*}
    \alpha \ln &\int_{\rn} e^{\zeta\cdot y}p_t(x,y)\, dy + (1-\alpha) \ln \int_{\rn} e^{\xi\cdot y} \,p_t(x,y)\, dy \\
    &=\ln \left[ \left(\int_{\rn} e^{\zeta\cdot y}p_t(x,y)\, dy \right)^\alpha \left(\int_{\rn} e^{\xi\cdot y}\,p_t(x,y)\, dy\right)^{1- \alpha} \right]\\
    &\geq \ln \int_{\rn} e^{\alpha \zeta\cdot y +(1-\alpha)\xi\cdot y}\,p_t(x,y)\, dy.
\end{align*}
Since $\nabla_\zeta  \ln \int_{\rn} e^{\zeta\cdot y} \,p_t(x,y)\, dy\Big|_{\zeta=0}=0$, the function
\begin{equation}\label{vz}
    v_t(x-y,x,\zeta)
    :=-\zeta\cdot(x-y)+\ln \lambda_t(x,i\zeta)= -\zeta\cdot y + \ln \int_{\rn} e^{-\zeta\cdot h} \, p_t(x,h)\,dh
\end{equation}
has a minimum, see \cite[Proposition 47.12]{Zei}.
Define
\begin{equation}\label{z0}
    \zeta_0 =\zeta_0(t,x,y):=\arg\min_\zeta v_t(x-y,x,\zeta).
\end{equation}
By construction, there exists the analytic extension of $v_t(x-y,x,\cdot)$ to $\comp^n$. This means that the arguments of Section~\ref{sec2} can be used to show the next theorem.
\begin{theorem}\label{pr2}
    Let $q(x,D)$ be defined by \eqref{qD}, and suppose that \eqref{B1}--\eqref{B4} are satisfied. Then the transition density of the probability measure associated with $q(x,D)$ exists and satisfies
    \begin{equation}\label{pt-gen}
        p_t(x,y)
        \leq e^{v_t(x-y,x,\zeta_0)} \|\lambda_t(x,\cdot)\|_{L_1},
        \quad\text{for all}\quad x,y\in\rn,\; t>0,
    \end{equation}
    where $v_t(x-y,x,\zeta)$ and $\zeta_0$ are defined by \eqref{vz} and \eqref{z0}, respectively.
\end{theorem}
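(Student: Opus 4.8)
The plan is to mimic the proof of Theorem~\ref{pr} essentially verbatim, replacing the L\'evy-process Fourier integral \eqref{den1} by the representation \eqref{plam}, $p_t(x,y)=\int_{\rn} e^{i\xi\cdot(x-y)}\lambda_t(x,\xi)\,d\xi$, which converges absolutely by \eqref{B2}. Fix $x,y\in\rn$ and $t>0$ and set $Q_t(z):=i\xi\cdot(x-y)+\ln\lambda_t(x,\xi)\big|_{\xi\to z}$; more precisely, by \eqref{B33}/Lemma~\ref{lemma-exp} together with \eqref{om}, the map $\zeta\mapsto\lambda_t(x,i\zeta)$ extends to an analytic function of $z\in\comp^n$ (analyticity in each variable separately, then Hartogs), so that $z\mapsto e^{i z\cdot(x-y)}\lambda_t(x,z)$ is entire in $\comp^n$. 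As in Section~\ref{sec2}, the target is to shift the contour of integration from $\real^n$ to $\real^n+i\zeta_0$, where $\zeta_0=\zeta_0(t,x,y)$ from \eqref{z0} is the minimiser of the convex function $\zeta\mapsto v_t(x-y,x,\zeta)$ of \eqref{vz}.

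First I would set up the Cauchy--Poincar\'e argument exactly as in the proof of Theorem~\ref{pr}: take the $(n+1)$-dimensional box $G$ with base $\{\Im z=0,\ \Re z\in\prod_j[-M_j,M_j]\}$ and lid $\{\Im z=\zeta_0,\ \Re z\in\prod_j[-M_j,M_j]\}$, fix an orientation so that base and lid carry opposite orientations, and apply the Cauchy--Poincar\'e theorem to conclude $\int_{\partial G} e^{i z\cdot(x-y)}\lambda_t(x,z)\,dz_1\wedge\cdots\wedge dz_n=0$. The next step is to show that the contributions of the lateral faces vanish as $|M|\to\infty$: on a lateral face $z=M+is\zeta_0$ with $M=(\pm M_1,\dots,\pm M_n)$ and $0\le s\le1$, one uses \eqref{B4}, $|\lambda_t(x,\eta+i\xi)|\le|\lambda_t(x,i\xi)|\,|\lambda_t(x,\eta)|$, to get $|e^{iz\cdot(x-y)}\lambda_t(x,z)|\le e^{-s\zeta_0\cdot(x-y)}\,|\lambda_t(x,is\zeta_0)|\,|\lambda_t(x,M)|$, and the factor $|\lambda_t(x,M)|\to0$ as $|M|\to\infty$ uniformly in $s\in[0,1]$; here I would invoke \eqref{B2} (equivalently the Riemann--Lebesgue-type decay of $\lambda_t(x,\cdot)\in L_1$) to justify the decay in $M$, exactly as $e^{-t\psi(M)}\to0$ was used in Section~\ref{sec2}. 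Consequently $\int_{\rn} e^{i\xi\cdot(x-y)}\lambda_t(x,\xi)\,d\xi=\int_{\rn} e^{i(\xi+i\zeta_0)\cdot(x-y)}\lambda_t(x,\xi+i\zeta_0)\,d\xi$.

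The last step is to bound the shifted integral. Taking real parts, $\Re\big(i(\xi+i\zeta_0)\cdot(x-y)+\ln\lambda_t(x,\xi+i\zeta_0)\big)=-\zeta_0\cdot(x-y)+\Re\ln\lambda_t(x,\xi+i\zeta_0)$. Using \eqref{B4} once more, $|\lambda_t(x,\xi+i\zeta_0)|\le|\lambda_t(x,i\zeta_0)|\,|\lambda_t(x,\xi)|$, hence $\Re\ln\lambda_t(x,\xi+i\zeta_0)\le\ln\lambda_t(x,i\zeta_0)+\ln|\lambda_t(x,\xi)|$, so the integrand is bounded in modulus by $e^{-\zeta_0\cdot(x-y)}\lambda_t(x,i\zeta_0)\,|\lambda_t(x,\xi)|=e^{v_t(x-y,x,\zeta_0)}\,|\lambda_t(x,\xi)|$ by the definition \eqref{vz} of $v_t$. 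Integrating over $\xi\in\rn$ gives
\begin{equation*}
    p_t(x,y)
    \le e^{v_t(x-y,x,\zeta_0)}\int_{\rn}|\lambda_t(x,\xi)|\,d\xi
    = e^{v_t(x-y,x,\zeta_0)}\,\|\lambda_t(x,\cdot)\|_{L_1},
\end{equation*}
which is \eqref{pt-gen}. Existence of the density itself is already guaranteed: \eqref{B1} and \eqref{B2} give $\|T_tu\|_{L_\infty}\le\|\lambda_t(x,\cdot)\|_{L_1}\|u\|_{L_1}$ and the representation \eqref{plam}, as recorded before the statement.

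I expect the main obstacle to be the rigorous justification of the two uses of \eqref{B4} at complex arguments together with the domination on the lateral faces: \eqref{B4} is stated only for real $\xi,\eta$, and one has to make sure that the analytic extension $\lambda_t(x,\cdot)$ really obeys the needed bounds along the faces $M\pm is\zeta_0$ uniformly in $M$, which is where the interplay between \eqref{B2} (integrability/decay in the real directions) and \eqref{B3}/Lemma~\ref{lemma-exp} (finiteness of $\lambda_t(x,i\zeta)$ in the imaginary directions, giving the analytic continuation) is used; the rest is a transcription of Section~\ref{sec2}. A secondary point requiring a line of care is the existence of the minimiser $\zeta_0$, but this is already supplied by the convexity of $\zeta\mapsto v_t(x-y,x,\zeta)$ established just before the statement via \cite[Proposition~47.12]{Zei}, the identity $\nabla_\zeta\ln\int e^{\zeta\cdot y}p_t(x,y)\,dy\big|_{\zeta=0}=0$, and the growth of $v_t$ in $\zeta$.
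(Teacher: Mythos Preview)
Your proposal is correct and follows essentially the same approach as the paper's proof: start from \eqref{plam}, shift the contour from $\rn$ to $\rn+i\zeta_0$ via the Cauchy--Poincar\'e theorem, and then bound the shifted integrand using \eqref{B4} to extract the factor $e^{v_t(x-y,x,\zeta_0)}$ and leave $\|\lambda_t(x,\cdot)\|_{L_1}$. The paper's own proof is much more terse---it simply writes ``By Cauchy's theorem we get \eqref{ptnew2}'' without spelling out the lateral-face estimate---so your added detail on the decay of $|\lambda_t(x,M)|$ via Riemann--Lebesgue (from \eqref{B2}) and the uniform-in-$s$ bound via \eqref{B4} is a welcome elaboration rather than a different route.
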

\begin{proof}
    Since
    \begin{equation}
        p_t(x,y)= \int_{\rn} e^{i\xi\cdot (x-y)}\lambda_t(x,\xi)\, d\xi,\label{ptnew}
    \end{equation}
    we can use, under our assumptions on $\lambda_t(x,\xi)$, the same approach as in Section~\ref{sec2}. By Cauchy's theorem we get
    \begin{equation}\label{ptnew2}
        p_t(x,y)
        =\int_{\rn} e^{-\zeta_0\cdot(x-y)+i\eta\cdot(x-y)}\lambda_t(x,\eta+i\zeta_0)\,d\eta.
    \end{equation}
    Hence, by condition \eqref{B4},
    \begin{align*}
        p_t(x,y)
        &\leq e^{v_t(x-y,x,\zeta_0)} \int_{\rn} \left|\frac{\lambda_t(x,\eta+i\zeta_0)}{\lambda_t(x,i\zeta)}\right|d\eta\\
        &\leq e^{v_t(x-y,x,\zeta_0)} \int_{\rn} |\lambda_t(x,\eta)|\,d\eta,
     \end{align*}
    which proves \eqref{pt-gen}.
\end{proof}

We can get an upper bound for the transition density $p_t(x,y)$ in terms of the symbol $q(x,\xi)$ and some  remainder term. For this we need further assumptions, e.g.\ that the symbol of the generator belongs to a certain symbol class introduced in \cite{Hoh98}, see also \cite[Definitions 2.4.3 and 2.4.4]{J02}.

To state the corollary of Theorem~\ref{pr2} we need a few basic facts of symbol classes. Let $\rho(|\alpha|):= |\alpha|\wedge 2$.
\begin{definition}
\begin{enumerate}
\item[\upshape i)]
    A continuous negative definite function $\psi:\rn\to\real$ belongs to the class $\Lambda$ if for all $\alpha\in\mathbb{N}_0^n$ there exists a constant $c_{|\alpha|}\geq 0$ such that
    \begin{equation}
        \left| \partial_\xi^\alpha (1+\psi(\xi))\right|
        \leq c_{|\alpha|} (1+\psi(\xi))^{\frac{2-\rho(|\alpha|)}{2}}.
    \end{equation}
    holds for all $\xi\in\rn$
\item[\upshape ii)]
    Let $m\in\real$, $\psi\in\Lambda$. A $C^\infty$-function $q:\rn\times\rn\to\comp$ is a symbol in the class $S_{\rho}^{m,\psi}(\rn) $ if for all $\alpha,\beta\in \mathbb{N}_0^n$ there are constants $c_{\alpha\beta}\geq 0$ such that
    \begin{equation}
        \left| \partial_\xi^\alpha \partial_x^\beta q(x,\xi)\right|
        \leq c_{\alpha\beta} (1+\psi(\xi))^{\frac{m-\rho(|\alpha|)}{2}}
    \end{equation}
    holds for all $\xi,x\in\rn$.
\end{enumerate}
\end{definition}

If $\rho\equiv 0$  we will simply write $S_0^{m,\psi}(\rn)$. From now on we will also assume that the symbol $q\in S_{\rho}^{2,\psi}(\rn)$ satisfies
\begin{gather}\tag{\bfseries B5}\label{B5}
    q(x,\xi) \geq c_r\,(1+\psi(\xi))
    \quad\text{for $x\in\rn$ and sufficiently large}\quad |\xi|\geq r.
\end{gather}
where $\psi\in\Lambda$ and $\psi(\xi)\geq c_1|\xi|^\kappa$ for some $\kappa>0$.

Note that \eqref{B5} implies \eqref{B1}, i.e.\ $(q(x,D),C_0^\infty(\rn))$ extends to the generator of a Feller semigroup, see \cite[Theorem~2.6.9]{J02}. By Theorem 2.8 from \cite{BB05} we can decompose $\lambda_t(x,\xi)$ in the following way.
\begin{equation}
    \lambda_t(x,\xi)=e^{-tq(x,\xi)}+r(t,x,\xi),
\end{equation}
where $r(t,x,\xi)\to 0$ weakly in $S_0^{-1,\psi}(\rn)$ as $t\to 0$.

Assume that \eqref{B2}, \eqref{B3} and \eqref{B5} hold. Then $r(t,x,\xi) \in L_1(\rn)$, and by \eqref{plam}
\begin{equation}
    p_t(x,y)
    =\int_{\rn} e^{i\xi\cdot(x-y)-tq(x,\xi)}\,d\xi + \int_{\rn} e^{i\xi\cdot(x-y)} r(t,x,\xi)\,d\xi.
\end{equation}
This allows us to formulate the following corollary of Theorem~\ref{pr2}.
\begin{corollary}
    Let $q\in S_{\rho}^{2,\psi}(\rn)$, and suppose that \eqref{B2}, \eqref{B3} and \eqref{B5} are satisfied. Then the transition function of the Feller process generated by $-q(x,D)$ has a density $p_t(x,y)$, and for all $x,y\in \rn$, $t>0$,
    \begin{equation}
        p_t(x,y)
        \leq e^{v_t(x-y,x,\zeta_0)} \int_{\rn} e^{-tq(x,\xi)}\,d\xi+ \int_{\rn} e^{i\xi\cdot(x-y)} \,r(t,x,\xi)\,d\xi,
     \end{equation}
    where $v_t(x-y,x,\zeta):=-\zeta\cdot(x-y)-tq(x,i\zeta)$, and $\zeta_0:=\arg\min_\zeta v_t(x-y,x,\zeta)$.
\end{corollary}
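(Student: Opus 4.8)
The strategy is to combine Theorem~\ref{pr2} with the symbol decomposition $\lambda_t(x,\xi) = e^{-tq(x,\xi)} + r(t,x,\xi)$ coming from \cite[Theorem 2.8]{BB05}. The first task is to check that all hypotheses of Theorem~\ref{pr2} are in force under the present assumptions \eqref{B2}, \eqref{B3} and \eqref{B5}. Assumption \eqref{B1} follows from \eqref{B5} via \cite[Theorem~2.6.9]{J02}, as already noted in the excerpt, so $-q(x,D)$ generates a Feller semigroup $(T_t)_{t\geq 0}$; \eqref{B2} is assumed directly, which guarantees that $p_t(x,dy)$ has a density $p_t(x,y)$ given by \eqref{plam}; \eqref{B3} is assumed and yields, by Lemma~\ref{lemma-exp}, that the process has exponential moments, hence that $w_t(x,\zeta) = \ln\lambda_t(x,i\zeta)$ in \eqref{om} is finite and convex in $\zeta$ and that $v_t(x-y,x,\cdot)$ in \eqref{vz} attains its minimum at $\zeta_0$ from \eqref{z0}. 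The only genuinely new verification is \eqref{B4}, the multiplicative domination $|\lambda_t(x,\eta+i\zeta)/\lambda_t(x,i\zeta)|\le|\lambda_t(x,\eta)|$; I expect this to be derived from the Lévy--Khintchine structure \eqref{lhr}--\eqref{lam} together with \eqref{B3}, imitating the inequality $\Re Q_t(\xi+i\xi_0,x)\le v_t(\xi_0,x)-t\psi(\xi)$ used in the proof of Theorem~\ref{pr}, with $\cosh(\zeta\cdot y)(1-\cos(\eta\cdot y))\ge 0$ playing the same role here.

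Once Theorem~\ref{pr2} applies, it gives $p_t(x,y)\le e^{v_t(x-y,x,\zeta_0)}\,\|\lambda_t(x,\cdot)\|_{L_1}$ with $v_t(x-y,x,\zeta) = -\zeta\cdot(x-y)+\ln\lambda_t(x,i\zeta)$. The next step is to replace $\ln\lambda_t(x,i\zeta)$ by $-tq(x,i\zeta)$ in the exponent and to split off the remainder in $\|\lambda_t(x,\cdot)\|_{L_1}$. Under \eqref{B5} the symbol $q\in S^{2,\psi}_\rho(\rn)$ dominates $1+\psi(\xi)\ge 1+c_1|\xi|^\kappa$ for large $|\xi|$, so $\int_\rn e^{-tq(x,\xi)}\,d\xi<\infty$; moreover $r(t,x,\cdot)\in S^{-1,\psi}_0(\rn)\subset L_1(\rn)$, so that by \eqref{plam} one has, exactly as displayed just before the corollary,
\begin{equation*}
    p_t(x,y)
    =\int_{\rn} e^{i\xi\cdot(x-y)-tq(x,\xi)}\,d\xi
    +\int_{\rn} e^{i\xi\cdot(x-y)}\,r(t,x,\xi)\,d\xi .
\end{equation*}
Feeding the decomposition $\lambda_t(x,i\zeta)=e^{-tq(x,i\zeta)}+r(t,x,i\zeta)$ into the Cauchy-shift identity \eqref{ptnew2}, and bounding the $r$-contribution by its own integral (which does not need a contour shift because $r$ is already integrable on $\rn$), one arrives at
\begin{equation*}
    p_t(x,y)
    \leq e^{v_t(x-y,x,\zeta_0)}\int_{\rn} e^{-tq(x,\xi)}\,d\xi
    +\int_{\rn} e^{i\xi\cdot(x-y)}\,r(t,x,\xi)\,d\xi ,
\end{equation*}
with $v_t(x-y,x,\zeta) = -\zeta\cdot(x-y)-tq(x,i\zeta)$ and $\zeta_0 = \arg\min_\zeta v_t(x-y,x,\zeta)$, which is the assertion.

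The step I expect to be the main obstacle is the careful bookkeeping for the remainder term: one must make sure that the contour-shifting argument of Section~\ref{sec2} (the Cauchy--Poincaré theorem applied to $z\mapsto e^{i\xi\cdot(x-y)}\lambda_t(x,\xi)$ on the cube $G$) still closes when $\lambda_t(x,\xi)$ is only known to equal $e^{-tq(x,\xi)}+r(t,x,\xi)$ — i.e.\ that the side contributions vanish as $|M|\to\infty$. This needs the analyticity of $\lambda_t(x,\cdot)$ in a strip containing $\{\Im z\in[0,\zeta_0]\}$ (furnished by the exponential moments from Lemma~\ref{lemma-exp}, as in \eqref{A1}$\Rightarrow$$w$ analytic) together with the symbol estimates from \cite{BB05,Hoh98,J02} controlling the growth of $r(t,x,\xi)$ in $\xi$. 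One also has to confirm that shifting the contour in the $\lambda_t$-integral and then substituting the decomposition commutes, i.e.\ that $r(t,x,\cdot)$ admits the same analytic continuation and decay as $\lambda_t(x,\cdot)-e^{-tq(x,\cdot)}$; this is where the precise statement of \cite[Theorem 2.8]{BB05} about convergence in $S^{-1,\psi}_0(\rn)$ must be invoked rather than merely quoted.
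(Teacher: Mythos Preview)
Your plan takes a detour that the paper avoids, and the detour contains a real gap. You propose to go through Theorem~\ref{pr2}, which requires \eqref{B4}; but the corollary does \emph{not} assume \eqref{B4}, and your sketch for deriving it from \eqref{B3} by ``imitating the inequality $\Re Q_t(\xi+i\xi_0,x)\le v_t(\xi_0,x)-t\psi(\xi)$'' does not go through. That inequality in Section~\ref{sec2} is a property of $e^{-t\psi(\xi)}$; here \eqref{B4} is a condition on the \emph{full} semigroup symbol $\lambda_t(x,\xi)$, which in the $x$-dependent case is not $e^{-tq(x,\xi)}$ but $e^{-tq(x,\xi)}+r(t,x,\xi)$, and there is no L\'evy--Khintchine formula for $\lambda_t$ that would give you $\cosh(\zeta\cdot y)(1-\cos(\eta\cdot y))\ge 0$ at the level of $\lambda_t$. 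Even if you could get \eqref{B4}, Theorem~\ref{pr2} produces the bound with $v_t(x-y,x,\zeta)=-\zeta\cdot(x-y)+\ln\lambda_t(x,i\zeta)$, whereas the corollary uses $v_t(x-y,x,\zeta)=-\zeta\cdot(x-y)-tq(x,i\zeta)$; replacing one exponent by the other and simultaneously splitting $\|\lambda_t(x,\cdot)\|_{L_1}$ does not yield the stated inequality without further argument.

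The paper's route is shorter and bypasses \eqref{B4} entirely. One first uses the decomposition displayed just before the corollary,
\[
    p_t(x,y)=\int_{\rn} e^{i\xi\cdot(x-y)-tq(x,\xi)}\,d\xi+\int_{\rn} e^{i\xi\cdot(x-y)}\,r(t,x,\xi)\,d\xi,
\]
and then applies the Cauchy--Poincar\'e argument of Theorem~\ref{pr} \emph{directly to the first integral}, with $x$ frozen. For fixed $x$ the function $\xi\mapsto q(x,\xi)$ is a real continuous negative definite function whose L\'evy kernel $N(x,dy)$ satisfies the exponential-moment bound \eqref{B3}; hence $q(x,\cdot)$ extends analytically and the same contour shift as in Theorem~\ref{pr} gives
\[
    \int_{\rn} e^{i\xi\cdot(x-y)-tq(x,\xi)}\,d\xi
    \le e^{v_t(x-y,x,\zeta_0)}\int_{\rn} e^{-tq(x,\xi)}\,d\xi
\]
with the $q$-based $v_t$ and $\zeta_0$ exactly as in the statement. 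The second integral is left untouched. No analyticity or decay of $r(t,x,\cdot)$ in a complex strip is needed, so the ``main obstacle'' you anticipate does not arise.
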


\begin{ack}
    Financial support through INTAS (grant YSF 06-1000019-6024),
    DAAD (research grant June-August 2009), and the Ministry of Science of Ukraine (grant no.\ M/7-2008)
    (for V.K.) and DFG (grant Schi 419/5-1) (for R.L.S.) is gratefully acknowledged.
     We would like to thank two anonymous referees for their careful reading; their comments
    helped to improve the quality of this paper.
\end{ack}

{\small\frenchspacing

}
\end{document}